\DeclareMathAlphabet{\mathpzc}{OT1}{pzc}{m}{it}
\newcommand{\tripprox}{\setbox0\hbox{$\approx$}%
\mbox{\makebox[0pt][l]{\raisebox{0.48\ht0}{$\approx$}}$\approx$}}
\newcommand{\tripproxs}{\,\setbox0\hbox{$\approx$}%
\mbox{\makebox[0pt][l]{\raisebox{0.48\ht0}{$\approx$}}$\approx$}\,}
\newcommand{\light}{\lambda}
\newcommand{\im}{\text{Im}}
\newcommand{\diam}{\text{diam}}
\newcommand{\ord}{\preceq_{\ast}}
\newcommand{\ui}{[0,1]}
\newcommand{\den}{\simeq_{\bbr}}
\newcommand{\wt}{\widetilde}
\newcommand{\mco}{\mathcal{O}}
\newcommand{\scru}{\mathscr{U}}
\newcommand{\scrv}{\mathscr{V}}
\newcommand{\bbd}{\mathbb{D}}
\newcommand{\bbn}{\mathbb{N}}
\newcommand{\bbq}{\mathbb{Q}}
\newcommand{\bbr}{\mathbb{R}}
\newcommand{\bbz}{\mathbb{Z}}
\newcommand{\ov}{\overline}
\newcommand{\lc}{\text{lc}}
\newtheorem{theorem}{Theorem}[section]
\newtheorem{lemma}[theorem]{Lemma}
\newtheorem*{maintheorem*}{Main Theorem}
\newtheorem{proposition}[theorem]{Proposition}
\newtheorem{corollary}[theorem]{Corollary}
\theoremstyle{definition}\newtheorem{definition}[theorem]{Definition}
\newtheorem{remark}[theorem]{Remark}
\begin{document}
%
\title{On R-trees, homotopies, and covering maps}

\author[J. Brazas]{Jeremy Brazas}
\address{West Chester University\\ Department of Mathematics\\
West Chester, PA 19383, USA}
\email{jbrazas@wcupa.edu}

\author[G.R. Conner]{Gregory R. Conner}
\address{Brigham Young University\\ Department of Mathematics \\
Provo, UT 84602, USA}
\email{conner@math.byu.edu}

\author[P. Fabel]{Paul Fabel}
\address{Mississippi State University\\ Department of Mathematics and Statistics\\
Mississippi State, MS 39762, USA}
\email{fabel@math.msstate.edu}

\author[C. Kent]{Curtis Kent}
\address{Brigham Young University\\ Department of Mathematics \\
Provo, UT 84602, USA}
\email{curtkent@mathematics.byu.edu}

\subjclass[2010]{ 54F50, 55R65, 57M10 }
\keywords{$\bbr$-tree, geodesic $\bbr$-tree reduction, path-homotopy, dendrite, unique path lifting, covering map}
\date{\today}

\begin{abstract}

A map $p:E\to X$ has the \emph{unique path lifting} property if every path in $X$, after a choice of an initial point, lifts uniquely to a path in $E$.  We prove that if a group $G$ acts on an $\mathbb R$-tree $T$ such that the quotient map $p: T\to T/G$ has the unique path lifting property, then the quotient space $T/G$ does not contain a disc.  As a consequence, we show that every map of manifolds with the unique path lifting property is a covering map.  The proof requires a study of one-dimensional backtracking in paths.   We show the surprising  and counterintuitive result that the equivalence relation given by homotopies of paths rel.\ endpoints is generated by inserting and deleting one-dimensional backtracking.
\end{abstract}

\maketitle

\section{Introduction}
Actions of groups on $\mathbb R$-trees are a central tool in topology and geometric group theory.  Paulin constructed essential actions of  hyperbolic groups on $\mathbb R$-trees \cite{Paulin1988}.  The boundary of Culler and Vogtmann's Outer Space has a natural boundary consisting of actions of a free group on an $\mathbb R$-tree  \cite[Section 2]{BestvinaFeighn1994}.  Sela and Groves used isometric actions on $\mathbb R$-trees to study limit groups \cite{Groves2009, Sela2001}.  Rips developed tools to understand finitely generated groups acting freely on $\mathbb R$-trees (see \cite{BestvinaFeighn1995, GaboriauLevittPaulin1994}).

Our interest here is motivated by considering  non-finitely generated groups acting on $\mathbb R$-trees. Berestovskii  and Plaut in \cite{BP10rtrees}  prove that every length space $X$ is the quotient of an $\mathbb R$-tree $T$ obtained via an action by isometries of a locally free group $G$ on $T$.  In fact, their results show that the quotient map $p: T\to  T/G=X$ has the property that every rectifiable path in $X$ lifts uniquely to a path in $T$, after a choice of initial point.  They constructed the $\mathbb R$-tree $T$ by considering the set of rectifiable paths in $X$ without ``backtracking" and showed that these paths naturally formed an $\mathbb R$-tree.

We say that $p:E\to X$ has the \emph{unique path lifting property} (or is a \textit{UPL map}) if for every path $\alpha:\ui\to X$ and every $e\in p^{-1}\big(\big\{\alpha(0)\big\}\big)$, there is a unique path $\wt{\alpha}:\ui\to E$ with $\wt{\alpha}(0)=e$ and $p\circ \wt\alpha= \alpha$. The following is well-known: 

\begin{center}
    (\textasteriskcentered) If $p:E\to X$ is a covering map, then $p$ has the unique path lifting property.
\end{center}

If the quotient map $p:E\to X$ coming from a group action is a UPL map, one may understand the group action in terms of paths in the base space. Thus it is natural to ask when can a topological space $X$ be the orbit space of a group acting on an $\mathbb R$-tree such that the quotient map has the unique path lifting property.  We prove the following.

\begin{theorem}\label{orbit}
  If a group $G$ acts on an $\mathbb R$-tree $T$ such that the quotient map $p: T\to T/G$ has the unique path lifting property, then the quotient space $T/G$ does not contain a 2-dimensional Euclidean disc.
\end{theorem}

\noindent Theorem \ref{orbit} follows from the following lemma.

\begin{lemma}\label{lemdydaksproblem}
If $X$ is a first countable, locally path-connected, and simply connected space, $E$ is a path-connected space, and $p:E\to X$ is a UPL map, then $p$ is a homeomorphism.
\end{lemma}

This gives the following surprising corollary for manifolds.

\begin{corollary}\label{corcoveringmap}
If $p:E\to X$ is a UPL map where $X$ is first countable, locally path connected, and semilocally simply connected and $E$ is locally path connected, then $p$ is a covering map. In particular, every UPL map $p:M\to N$ of manifolds is a covering map.
\end{corollary}

\noindent  Lemma \ref{lemdydaksproblem} is a positive answer to Dydak's Unique Lifting Problem: Is every map with the unique path lifting  property from a path-connected space $E$ to the unit disc $\mathbb D^2$ in the Euclidean plane  a homeomorphism?  (See \cite[Problem 2.3]{Dydak} and \cite[ Problem 4.6]{BM}.) Additionally, Corollary \ref{corcoveringmap} is a converse to (\textasteriskcentered), which is surprisingly difficult to verify outside of one-dimensional cases.

The primary obstruction to establishing the converse to (\textasteriskcentered) is showing that the unique lifting of paths rel.\ basepoint implies the lifting of path homotopies. If one assumes that path-lifts vary continuously relative to their starting point, then standard techniques would apply, for example see  \cite[Section 12 of Chapter III]{Hu}. However, the weaker lifting hypothesis of UPL maps need not imply continuous lifting in general. In fact, the aforementioned results of Berestovskii-Plaut may be considered as evidence that the converse of (\textasteriskcentered) should not hold even when $X=\bbd^2$.  Since arbitrary paths in $\bbd^2$ can be approximated by paths in one-dimensional subspaces and since $\bbr$-trees are closed under various limiting constructions, limiting methods seem a promising way to produce a counterexample extending that for rectifiable paths. 

Our principal tool is understanding ``one-dimensional backtracking." We will make backtracking formal by considering maps into $\mathbb R$-trees.  Suppose the non-constant path $\alpha:\ui\to X$ in a metric space factors through an $\bbr$-tree $T$ as $\alpha=q \circ p $ for a path $p:\ui\to T$ and a map $q:T\to X$. Let $r:\ui\to T$ parameterize the unique geodesic in $T$ from $p(0)$ to $p(1)$. Then we say that the path $\beta=q \circ r$ is obtained from $\alpha$ by \emph{geodesic $\bbr$-tree reduction}. Certainly, if $\beta$ is a geodesic $\bbr$-tree reduction of $\alpha$, then $\alpha$ and $\beta$ are homotopic. The surprising result and main idea to prove Theorem \ref{orbit} and Corollary \ref{corcoveringmap} is that any two homotopic paths are both $\bbr$-tree reductions of a single common path. Hence, one may delete ``one-dimensional backtracking" from this common path to obtain either of the two homotopic paths.

\begin{theorem}\label{theoremmain}
If $X$ is a topological space and $\alpha,\beta:\ui\to X$ are homotopic paths, then there exists a path $\gamma:\ui\to X$ such that $\alpha$ and $\beta$ are both geodesic $\bbr$-tree reductions of $\gamma$.
\end{theorem}

Typically, the path $\gamma$ will be very complicated, as it (1) will be space-filling in the image of some chosen homotopy between $\alpha$ and $\beta$ and (2) must pass through all of the points of both $\alpha$ and $\beta$ in the same order that each of these paths does. Despite its complexity, the construction of $\gamma$ from the pair $\alpha,\beta$ is entirely explicit. Our Main Theorem implies that the equivalence relation on paths generated by geodesic $\bbr$-tree reduction coincides precisely with path-homotopy (Corollary \ref{corcoincideswithhomotopy}).

In \cite[Corollary 2.4]{HKK}, it is shown that the following notion of ``thin homotopy" is an equivalence relation on the set of paths in a Hausdorff space: \textit{two paths are equivalent if they form a loop that factors through a simplicial tree}. The key idea needed to verify transitivity is a technical result (\cite[Proposition 5.5]{HKK}) ensuring that certain pushouts of simplicial trees are simplicial trees. Theorem \ref{theoremmain} implies that if one replaces ``simplicial tree" with ``$\bbr$-tree" in the definition of thin homotopy, then the analogous relation is \textit{not} transitive (Corollary \ref{corollarynottransitive}). This occurs precisely because there exist extreme situations, namely those modeled by our main construction, where analogous pushouts of $\bbr$-trees are topological discs.

\subsection*{Outline of paper}
Section \ref{sec: equivalences} establishes notation for four equivalence relations on the path space that we will require throughout the paper and some elementary properties of these relations.  Section \ref{sec: Canor paths} defines Cantor paths and staggered paths, which we will use in our construction of the path $\gamma$ from Theorem \ref{theoremmain} to control the distance between approximations of $\gamma$.
In Section \ref{sec: inserting inverse pairs}, we define CIP-loops and LIP-loops and state Lemma \ref{gammaextensionlemma} and Lemma \ref{alternatinglemma}.  These two lemmas allow us to conclude that whenever a given path is modified by a recursive process of inserting out-and-back paths along parts of the domain where it is already constant, the uniform limit is $\bbr$-tree homotopic to the original path.  In Section \ref{sec: proof of main theorem}  we prove the technical result, Lemma \ref{fabelsconstruction}, that is the inductive step in the construction of  $\gamma$ for Theorem \ref{theoremmain} and complete the proof of Theorem \ref{theoremmain}.  Section \ref{sec: Dydak's problem} contains the proofs of Theorem \ref{orbit}, Lemma \ref{lemdydaksproblem}, and Corollary \ref{corcoveringmap}.  Section \ref{appendix} is the appendix and contains the proof of Lemma \ref{gammaextensionlemma}.

\section{$\bbr$-tree factorization of paths and loops}\label{sec: equivalences}

For paths $\alpha$ and $\beta$ in a space $X$, $\alpha\beta$ will denote path concatenation and $\ov{\alpha}$ will denote the reverse path. If $(X,d)$ is a metric space, we let $\rho(\alpha,\beta)=\sup\{d(\alpha(t),\beta(t))\mid t\in\ui\}$ denote the sup-metric on the space of paths from $\ui$ to $X$ and recall that when $(X,d)$ is complete, then so is its path space \cite[4.3.13]{Engelking}. We also require notation for a variety of relations on paths.

\begin{definition}\label{deffrechet}
Let $\alpha:[a,b]\to X$, $\beta:[c,d]\to X$ be paths.
\begin{enumerate}
\item We say $\alpha$ is \textit{equivalent} to $\beta$ and write $\alpha\equiv\beta$ if $\alpha=\beta\circ h$ for some increasing homeomorphism $h: [a,b]\to [c,d]$. If $h$ is linear, we may say that $\alpha$ is a \textit{linear reparameterization of} $\beta$.
\item We say $\alpha$ and $\beta$ are \textit{Fr\'echet equivalent} and write $\alpha\tripproxs\beta$ if $\alpha\circ f=\beta\circ g$ for non-decreasing continuous surjections $f:\ui\to [a,b]$ and $g:\ui\to [c,d]$.
\end{enumerate}
\end{definition}
Both $\equiv$ and $\tripprox$ are equivalence relations finer than the path-homotopy relation $\simeq$. See \cite{CF} for a proof of the transitivity of $\tripprox$.  Recall that an \textit{$\bbr$-tree} is a uniquely arcwise-connected, and locally arcwise connected geodesic metric space \cite{bestvinartrees,MORtree} and a \emph{Peano continuum} is a compact, connected, locally path-connected, metric space.      A  \textit{dendrite} is Peano continuum that does not contain a simple closed curve \cite[Definition 10.1]{Nadler}. It is then an exercise to see that a dendrite is uniquely arcwise-connected and locally arcwise connected.  Mayer and Oversteegen show that any uniquely arcwise connected and locally arcwise connected metric space admits a geodesic metric, see \cite[Theorem 5.1]{MORtree}.  Thus every dendrite admits a metric that makes it an $\bbr$-tree.

\begin{definition}\label{defden}
Let $\alpha,\beta:\ui\to X$ be paths in a topological space $X$.
\begin{enumerate}
\item We say that $\beta$ is a \textit{geodesic $\bbr$-tree reduction of} $\alpha$, and we write $\alpha\geq_{\bbr}\beta$, if there is an $\bbr$-tree $T$, a path $p:\ui\to T$, an injective path $r:\ui\to T$ with $p(i)=r(i)$ for $i\in\{0,1\}$, and a map $f:T\to X$ such that $f\circ p=\alpha$ and $f\circ r=\beta$.
\item We say that $\alpha$ and $\beta$ are \textit{$\bbr$-tree homotopic}, and we write $\alpha\den\beta$, if $\alpha\ov{\beta}$ is a loop that factors through an $\bbr$-tree, that is, if $\alpha(i)=\beta(i)$ for $i\in\{0,1\}$ and if there exists an $\bbr$-tree $T$, a loop $\ell:\ui\to T$ and a map $f:T\to X$ such that $f\circ\ell=\alpha\ov{\beta}$.
\end{enumerate}
\end{definition}

Note that \[\alpha\equiv\beta\text{ }\Rightarrow\text{ }\alpha\geq_{\bbr}\beta\text{ }\Rightarrow\text{ }\alpha\den\beta\text{ }\Rightarrow\text{ }\alpha\simeq \beta\] where the last implication holds since $\bbr$-trees are contractible. Geodesic $\bbr$-tree reduction is our formalization of removing backtracking, as mentioned in the introduction and abstract. Also, if $\alpha\den\beta$ where $\beta$ is injective, then $\alpha\geq_{\bbr}\beta$. Certainly, $\den$ is reflexive and symmetric. However, the transitivity of $\den$ does not hold in general, see Corollary \ref{corollarynottransitive}. In a given space, transitivity of $\den$ is equivalent to whether or not each path admits a unique (up to $\equiv$) ``maximally reduced" geodesic $\bbr$-tree reduction. One must be wary of this temptation as our main result implies that there exist some paths in the plane which fail to have unique maximally reduced representatives. We note some key properties of $\den$ and $\geq_{\bbr}$ that \textit{do} hold.

\begin{remark}
If $T_1$ and $T_2$ are $\bbr$-trees, $f_1:\ui\to T_1$ is an injective path, and $f_2:\ui\to T_2$ is any path, then the pushout of $f_1$ and $f_2$ is an $\bbr$-tree. Indeed, this pushout is obtained by attaching the closure of each connected component of $T_1\backslash f_1(\ui)$ to $T_2$ at a point along the image of $f_2$. This fact can be used to show the following.
\begin{enumerate}
\item $\geq_{\bbr}$ is transitive and is antisymmetric up to equivalence, that is, $\alpha\geq_{\bbr}\beta$ and $\beta \geq_{\bbr}\alpha$ $\Rightarrow$ $\alpha\equiv \beta$. Hence, $\geq_{\bbr}$ induces a partial order on path-equivalence classes.
\item $\alpha\den\beta$ if and only if $\alpha$ and $\beta$ share a common geodesic $\bbr$-tree reduction, that is, if and only if there exists $\delta$ with $\alpha\geq_{\bbr}\delta$ and $\beta\geq_{\bbr}\delta$.
\end{enumerate}
\end{remark}

\begin{lemma}\label{denproperties}
The following properties of $\bbr$-tree homotopy hold.
\begin{enumerate}
\item If $\alpha,\beta:\ui\to X$ are paths such that $\alpha\den\beta$ (resp. $\alpha\geq_{\bbr}\beta$) and $f:X\to Y$ is a map, then $f\circ\alpha\den f\circ \beta$ (resp. $f\circ\alpha\geq_{\bbr}f\circ \beta$).
\item If $\alpha_1\den\beta_1$, $\alpha_2\den\beta_2$, and $\alpha_1(1)=\alpha_2(0)$, then $\alpha_1\alpha_2\den \beta_1\beta_2$.

\hspace{-5em}  If $X$ is Hausdorff, then the following also hold.

\item If $\alpha,\beta:\ui\to X$ are paths and $\alpha\circ g_1\den \beta\circ g_2$ for non-decreasing continuous surjections $g_1,g_2:\ui\to\ui$, then $\alpha\den\beta$.
\item If $\alpha_1,\alpha_2:\ui\to X$ are paths such that $\alpha_1\den\alpha_2$ and $\beta_1,\beta_2:\ui\to X$ are paths such that $\alpha_1\tripproxs\beta_1$ and $\alpha_2\tripproxs\beta_2$, then $\beta_1\den\beta_2$.
\end{enumerate}
\end{lemma}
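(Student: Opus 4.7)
The plan is to dispatch (1) and (2) by direct construction, derive (4) from (3), and focus on (3) as the heart of the argument.

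For (1), I would compose the given factoring map $g:T\to X$ with $f$, keeping the tree $T$ and all paths/loops unchanged: $(f\circ g)\circ\ell = (f\circ\alpha)\ov{f\circ\beta}$ immediately witnesses $f\circ\alpha\den f\circ\beta$, and the analogue works for $\geq_{\bbr}$ by using the same $p,r$ and replacing $g$ with $f\circ g$. For (2), write the given factorizations as $g_i\circ(p_i\ov{q_i}) = \alpha_i\ov{\beta_i}$ with $p_i(0)=q_i(0)$ and $p_i(1)=q_i(1)$. Since $\alpha_1(1)=\alpha_2(0)=\beta_1(1)=\beta_2(0)$ in $X$, the wedge $T := T_1\vee T_2$ obtained by identifying $p_1(1)\in T_1$ with $p_2(0)\in T_2$ is an $\bbr$-tree, and the concatenation $\ell := p_1\cdot p_2\cdot\ov{q_2}\cdot\ov{q_1}$ is a loop in $T$ whose image under the induced map $g:T\to X$ (agreeing with $g_i$ on $T_i$) is $\alpha_1\alpha_2\ov{\beta_2}\ov{\beta_1}=\alpha_1\alpha_2\ov{\beta_1\beta_2}$.

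For (3), I would first reparameterize: let $\phi:\ui\to\ui$ be the non-decreasing continuous surjection defined by $\phi(s)=g_1(2s)/2$ on $[0,1/2]$ and $\phi(s)=1-g_2(2-2s)/2$ on $[1/2,1]$; a routine check gives $(\alpha\ov{\beta})\circ\phi = (\alpha g_1)\ov{\beta g_2}$. From the hypothesis, fix a factorization $(\alpha g_1)\ov{\beta g_2}=h\circ\ell$ with $\ell:\ui\to T$ a loop in an $\bbr$-tree and $h:T\to X$ a map. For each maximal plateau $[a,b]$ of $\phi$, the sub-dendrite $\ell([a,b])\subseteq T$ lies in a single fiber of $h$. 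Let $\sim$ be the equivalence relation on $T$ generated by $x\sim y$ whenever $x,y\in\ell([a,b])$ for some plateau, set $T'=T/\sim$ with quotient $\pi:T\to T'$, and let $h':T'\to X$ be the induced map (well-defined since $h$ is constant on every equivalence class). The composition $\pi\circ\ell$ is constant on $\phi$-fibers and therefore descends through the quotient $\phi$ to a continuous loop $\ell':\ui\to T'$ with $\ell'\circ\phi=\pi\circ\ell$, yielding $h'\circ\ell'=\alpha\ov{\beta}$. This establishes $\alpha\den\beta$ provided that $T'$ is an $\bbr$-tree.

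For (4), from $\alpha_1\den\alpha_2$ via $h\circ(p\ov{q})$ and the Fr\'echet identities $\alpha_i\circ f_i=\beta_i\circ g_i$, I would form the loop $(p\circ f_1)\ov{(q\circ f_2)}$ in $T$, whose $h$-image is $(\beta_1 g_1)\ov{\beta_2 g_2}$. Thus $\beta_1 g_1\den\beta_2 g_2$, and (3) applied to the surjections $g_1,g_2$ delivers $\beta_1\den\beta_2$.

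The principal obstacle will be verifying in (3) that the quotient $T'$ is indeed an $\bbr$-tree. Three structural inputs should cooperate: $\phi$ has at most countably many nontrivial plateaus whose total length is at most $1$; uniform continuity of $\ell$ forces the diameters of the collapsed sub-dendrites $\ell([a,b])$ to tend to $0$ as the plateaus accumulate; and each $\sim$-class is a union of such sub-dendrites. Together these should make the decomposition of $T$ into $\sim$-classes upper semi-continuous, so that $T'$ is a Hausdorff Peano continuum inheriting from $T$ the absence of simple closed curves---a dendrite, and in particular an $\bbr$-tree.
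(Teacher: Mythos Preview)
Your treatment of (1), (2), and (4) matches the paper's: (1) and (2) are handled identically, and your reduction of (4) to (3) via the reparameterized loop $(p\circ f_1)\ov{(q\circ f_2)}$ is exactly what the paper does.

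For (3), your reparameterization $\phi$ is the paper's map $g$, but from there the two arguments diverge. You propose collapsing the sub-dendrites $\ell([a,b])$ for plateaus $[a,b]$ of $\phi$ and then arguing directly that the resulting quotient $T'$ is a dendrite via an upper-semicontinuity check. The paper instead first replaces $T$ by the dendrite $\ell(\ui)$ and $X$ by the compact metric image $F(T)$, then applies the Monotone--Light Factorization Theorem to $F:T\to X$, writing $F=F'\circ\pi$ with $\pi$ monotone and $F'$ light. The quotient $T'=\pi(T)$ is a dendrite for free (monotone images of dendrites are dendrites), and lightness of $F'$ forces $\pi\circ\ell$ to be constant on each plateau of $g$, so it descends to the desired loop $\mu$.

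Your route is plausible but the step you flag as the ``principal obstacle'' is genuinely delicate and not quite handled by the three inputs you list. The equivalence relation you generate can chain together infinitely many sub-dendrites $\ell([a_i,b_i])$ whose pairwise intersections are nonempty; even if each has small diameter, the resulting $\sim$-class need not be closed, and the decomposition need not be upper semicontinuous as stated. You would need to pass to the closed equivalence relation generated by $\sim$ and re-verify that $h$ remains constant on classes, or else observe that every $\sim$-class lies in a connected component of a fiber of $h$ and simply collapse those components instead---at which point you have essentially rediscovered the monotone part of the Monotone--Light Factorization. The paper's appeal to Eilenberg's theorem and the standard continuum-theory fact about monotone images of dendrites buys exactly this: the dendrite structure of $T'$ comes for free, and the only remaining check (that $\pi\circ\ell$ is plateau-constant) is a one-line consequence of lightness.
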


\begin{proof}
(1) is clear. (2) holds since the one-point union of two $\bbr$-trees is an $\bbr$-tree. (3) Since $\alpha\circ g_1\den \beta\circ g_2$, there exists an $\bbr$-tree $T$, map $F:T\to X$, and loop $\ell:\ui \to T$ such that $F\circ\ell=(\alpha\circ g_1) (\ov{\beta\circ g_2})$. We may replace $T$ with the dendrite $\ell(\ui)$ and assume $\ell$ is surjective. Since $X$ is assumed to be Hausdorff, $F(T)$ is a compact metric space \cite[8.17]{Nadler} and we may replace $X$ with $F(T)$. Define a non-decreasing surjection $g:\ui\to \ui$ by
\[g(t)=\begin{cases}
\frac{1}{2}g_1(2t), & \text{ if }t\in[0,\frac{1}{2}],\\
1-\frac{1}{2}g_2(2-2t), & \text{ if }t\in[\frac{1}{2},1]
\end{cases}\]
so that $F\circ \ell=(\alpha\ov{\beta})\circ g$. Applying the Monotone-Light Factorization Theorem, see \cite[Theorem 1]{Eilenberg}  or  \cite[Theorem 2.3]{Whyburn}, to the map $F:T\to X$ of compact metric spaces, we have $F=F'\circ \pi$ for a monotone map $\pi:T\to T'$ and a light map $F':T'\to X$. Since $\pi$ is a monotone quotient map on a dendrite, $T'$ is a dendrite \cite[Exercise 10.52]{Nadler}. Since $g$ is monotone, $F'$ is light, and $F'\circ \pi\circ \ell=(\alpha\ov{\beta})\circ g$, the loop $\pi\circ \ell:\ui\to T'$ is constant on the fibers of $g$. Thus, there is a unique loop $\mu:\ui\to T'$ such that $\pi\circ \ell=\mu\circ g$.
\[\xymatrix{
\ui \ar[dd]_-{g} \ar[rr]^-{\ell} && T \ar[dd]^-{F} \ar[dl]_-{\pi} \\
& T' \ar[dr]_-{F'}\\
\ui \ar[ur]^-{\mu} \ar[rr]_-{\alpha\ov{\beta}} & & X
}\]
Since $F'\circ \mu\circ g=(\alpha\ov{\beta})\circ g$ where $g$ is surjective, we have $F'\circ \mu=\alpha\ov{\beta}$, proving $\alpha\den\beta$.

(4) Write $\alpha_1\circ f_1=\beta_1\circ g_1$ and $\alpha_2\circ f_2=\beta_2\circ g_2$ for non-decreasing continuous surjections $f_1,f_2,g_1,g_2:\ui\to\ui$. Since $\alpha_1\den\alpha_2$, there exists an $\bbr$-tree $T$, a map $F:T\to X$, and loop $\ell:\ui\to T$ such that $F\circ \ell=\alpha_1\ov{\alpha_{2}}$. Write $\ell=\mu_1\ov{\mu_{2}}$ so that $F\circ\mu_i=\alpha_i$, $i\in\{1,2\}$. If $\ell '=(\mu_1\circ f_1)(\ov{\mu_2\circ f_2})$, then the factorization \[F\circ \ell '=(\alpha_1\circ f_1)(\ov{\alpha_2\circ f_2})=(\beta_1\circ g_1)(\ov{\beta_2\circ g_2})\] shows that $\beta_1\circ g_1\den\beta_2\circ g_2$. Now, (3) implies $\beta_1\den\beta_2$.
\end{proof}

\section{Cantor paths and their staggering}\label{sec: Canor paths}
\begin{definition}
 Recall that a \emph{Cantor set} is any non-empty, compact, perfect, and totally disconnected metric space and any such space is homeomorphic to the standard ternary Cantor set.  An open set $U\subseteq (0,1)$ is a \textit{Cantor complement} if $\ui\backslash U$ is homeomorphic to a Cantor set.
\end{definition}

Note that an open set $U\subseteq (0,1)$ is a Cantor complement if and only if $U$ is dense in $\ui$ and the set of connected components of $U$ have the order type of $\bbq$. It is necessarily the case that the connected components of a Cantor complement have pairwise-disjoint closures. Given a path $\alpha:\ui\to X$, let $\lc(\alpha)$ denote the set of connected components of the open set $\mco(\alpha)=\{t\in\ui\mid \alpha\text{ is locally constant at }t\}$. Note that distinct elements of $\lc(\alpha)$ have pairwise-disjoint closures and that $\alpha$ is a light map if and only if $\mco(\alpha)=\emptyset$.

\begin{definition}
A non-constant path $\alpha:\ui\to X$ is a \textit{Cantor path} if $\mco(\alpha)$ is a Cantor complement.
\end{definition}

The standard ternary Cantor map $\tau:\ui\to\ui$ collapses the closure of each component of the complement of the ternary Cantor set to a point. Thus $\tau$ is a non-decreasing, surjective Cantor path. If $\alpha$ is a light path, then $\alpha\circ \tau$ is a Cantor path. A key concept in the proof of our main theorem is the following.

\begin{definition}\label{staggereddef}
Let $U_1,U_2$ be proper open subsets of $(0,1)$ such that for each $i\in\{1,2\}$, $\inf(U_i)=0$, $\sup(U_i)=1$, and such that the connected components of $U_i$ have pairwise disjoint closures. We say the sets $U_1$ and $U_2$ are \textit{staggered} if $U_1\cup U_2=(0,1)$, or equivalently, if $\partial U_1\cap \partial U_2=\{0,1\}$.
\end{definition}

The next lemma, which has a straightforward proof, allows us to select connected components from staggered Cantor complements $U_1$ and $U_2$ so that the resulting collections have the order type of $\bbz$ while still having staggered unions.

\begin{lemma}\label{choosezsetlemma}
Let $U_1$ and $U_2$ be staggered Cantor complements. Then for $i\in\{1,2\}$ there exists a set of connected components $\scru_i$ of $U_i$ such that, $\scru_i$ has the order type of $\bbz$, $\inf(\bigcup\scru_i)=0$, $\sup(\bigcup\scru_i)=1$, and such that $\bigcup\scru_1$ and $\bigcup\scru_2$ are staggered.
\end{lemma}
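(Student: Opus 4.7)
The plan is to build a bi-infinite ``zigzag'' sequence $(V_n)_{n\in\bbz}$ of open intervals in $(0,1)$ alternating between components of $U_1$ (even indices) and components of $U_2$ (odd indices), with consecutive intervals overlapping, and then set $\scru_1=\{V_{2n}:n\in\bbz\}$ and $\scru_2=\{V_{2n+1}:n\in\bbz\}$. A preliminary observation keeps the zigzag from terminating: since $\ui\setminus U_i$ is homeomorphic to a Cantor set, $0$ and $1$ are non-isolated in $\ui\setminus U_i$, so no component of $U_i$ can have $0$ or $1$ as an endpoint.

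For the construction, fix $x_{0}\in(0,1)$; as $U_1\cup U_2=(0,1)$, without loss of generality $x_{0}$ lies in a component $V_{0}=(a_{0},b_{0})$ of $U_1$. Inductively, given $V_{2n}=(a_{2n},b_{2n})$, a component of $U_1$, the right endpoint $b_{2n}$ lies in $(0,1)\setminus U_1\subseteq U_2$, so I take $V_{2n+1}=(c_{2n+1},d_{2n+1})$ to be the component of $U_2$ containing $b_{2n}$. By the symmetric argument applied at $d_{2n+1}\in(0,1)\setminus U_2\subseteq U_1$, let $V_{2n+2}$ be the component of $U_1$ containing $d_{2n+1}$; negative indices are produced analogously from the left endpoints $a_{2n}$ and $c_{2n-1}$. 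Consecutive overlap $V_n\cap V_{n+1}\neq\emptyset$ is built in, and the right endpoints form a strictly increasing sequence $b_{0}<d_{1}<b_{2}<d_{3}<\cdots$ in $(0,1)$.

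Let $L=\sup_n b_{2n}=\sup_n d_{2n+1}$. If $L<1$, then $L\in U_1\cup U_2$, and the component of $U_i$ containing $L$ would eventually contain some $b_{2n}$ or $d_{2n+1}$, contradicting $b_{2n}\in\partial U_1$ or $d_{2n+1}\in\partial U_2$ respectively. Hence $b_{2n}\to 1$, and symmetrically $a_{-2n}\to 0$. The collections $\scru_i$ then consist of components of $U_i$ indexed by $\bbz$ in their natural order, so each has order type $\bbz$ with $\inf\bigcup\scru_i=0$ and $\sup\bigcup\scru_i=1$; components of $\bigcup\scru_i$ have pairwise disjoint closures since they are already components of $U_i$, and the consecutive-overlap property combined with convergence at $0$ and $1$ gives $\bigcup\scru_1\cup\bigcup\scru_2=\bigcup_{n\in\bbz}V_n=(0,1)$. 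Thus $\bigcup\scru_1$ and $\bigcup\scru_2$ are staggered.

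The only real technical point is the preliminary endpoint observation together with the one-line check that the zigzag step is always available at each iteration; once I know the construction never terminates in either direction, everything else reduces to standard monotone-sequence arguments.
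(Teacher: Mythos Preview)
Your argument is correct: the zigzag construction produces overlapping consecutive intervals whose union is therefore an interval, and your sup/inf argument forces this interval to be all of $(0,1)$; distinctness and the $\bbz$-ordering of the $V_{2n}$ (and of the $V_{2n+1}$) follow since each step moves strictly to the right (resp.\ left), as you implicitly use. The paper does not actually give a proof of this lemma, remarking only that it ``has a straightforward proof,'' and your zigzag is exactly the natural construction one would expect, so there is nothing to compare.
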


\begin{proof}
    Let $\scru_1$ be the components of $U_1$ that are not entirely contained in some component of $U_2$.  Let $\scru_2$ be the components of $U_2$ which are not entirely contained in some element of $\scru_1$.  Note that $\scru_1\cup\scru_2$ still covers $(0,1)$. Now suppose the set of endpoints of $\scru_1$ has an accumulation point, $p$,  that is neither $0$ nor $1$. Since the components of $\scru_1$ are disjoint and open, $p$ is not contained in any element of $\scru_1$. Since  $\scru_1\cup\scru_2$ covers  $(0,1)$, $p$ is contained in some  $O\in\scru_2$. By construction each element of $\scru_2$ can contain at most two endpoints of $\scru_1$, which contradicts that $p$ is an accumulation point of the endpoints of $\scru_1$.

    Note that since both $\scru_1$ and $\scru_2$ are nonempty and each component  separates $\bbr$, it must be that the set of components, and thus the set of endpoints, of $\scru_1$ is infinite, does not contain $0$ or $1$, but has both as limit points.

    Therefore the set of endpoints of $\scru_1$ is discrete and (being a subset of $\bbr$) linear and has limit points at $0$ and $1$ it must be a bi-infinite sequence and thus have order type $\bbz$.  A similar argument holds for $\scru_2$.
\end{proof}

\begin{lemma}\label{lemmaccsetmove}
Let $U$ be a Cantor complement. For every $\epsilon>0$, there exists an increasing homeomorphism $f:\ui\to \ui$ such that $\rho(f,id_{\ui})<\epsilon$ and such that $U$ and $f(U)$ are staggered.
\end{lemma}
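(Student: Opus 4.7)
The staggered condition $U\cup f(U)=(0,1)$ is equivalent to requiring $f(C)\cap C\subseteq\{0,1\}$, where $C=\ui\setminus U$ is a Cantor set containing $0$ and $1$. Since $f(0)=0$ and $f(1)=1$ are automatic for any increasing homeomorphism of $\ui$, the task reduces to constructing an increasing homeomorphism $f:\ui\to\ui$ with $\rho(f,id_{\ui})<\epsilon$ that maps $C\setminus\{0,1\}$ into the open dense set $U$. The plan is to build $f$ as a piecewise linear homeomorphism whose breakpoints are the endpoints of a carefully chosen countable ``scaffold'' of components of $U$.

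Using the density of $U$ and the $\bbq$-type order of its components, I would select components $I_{k}=(a_{k},b_{k})$ of $U$, indexed by $k\in\bbz$ in the natural left-to-right order inherited from $\ui$, such that $a_{k}\to 0$ as $k\to-\infty$, $b_{k}\to 1$ as $k\to+\infty$, each $b_{k}-a_{k}<\epsilon/4$, and each gap $[b_{k},a_{k+1}]$ has length less than $\epsilon/4$ (with both diameters shrinking to $0$ near the endpoints). Such a scaffold exists because every nonempty open subinterval of $\ui$ contains infinitely many components of $U$. Inside each $I_{k}$, pick two interior points $a_{k}<p_{k}<q_{k}<b_{k}$.

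Then I would define $f$ piecewise linearly by assigning breakpoint values $f(0)=0$, $f(1)=1$, $f(a_{k})=q_{k}$, and $f(b_{k})=p_{k+1}$, and interpolating linearly between consecutive breakpoints. Monotonicity is immediate since the assigned values are strictly increasing. On each Cantor gap $[b_{k},a_{k+1}]$, the map $f$ is linear from $p_{k+1}$ to $q_{k+1}$, so the entire gap is mapped into $[p_{k+1},q_{k+1}]\subseteq I_{k+1}\subseteq U$. Together with $f(a_{k})=q_{k}\in I_{k}\subseteq U$ and $f(b_{k})=p_{k+1}\in I_{k+1}\subseteq U$, this shows every point of $C\cap(0,1)$ is sent into $U$; hence $f(C)\cap C=\{0,1\}$. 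The closeness bound $\rho(f,id_{\ui})<\epsilon$ follows because the displacement at any breakpoint is bounded by the sum of the widths of an adjacent scaffold component and an adjacent gap, both less than $\epsilon/4$.

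The main obstacle lies at the endpoints $0$ and $1$: the Cantor set $C$ accumulates there, yet $f(0)=0$ and $f(1)=1$ are forced, so no finite scaffold can succeed. The resolution, built into the choice above, is to use a countably infinite scaffold accumulating at both endpoints, with component widths and gap lengths shrinking to $0$. Then $|f(t)-t|\to 0$ as $t\to 0$ or $t\to 1$, so the piecewise linear map extends continuously to the identity at the endpoints, yielding a genuine homeomorphism of $\ui$.
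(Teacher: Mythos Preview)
Your overall strategy is sound and, once repaired, gives a proof that is in fact simpler than the paper's. However, your scaffold selection contains a genuine error: the two conditions ``each $b_k-a_k<\epsilon/4$'' and ``each gap $[b_k,a_{k+1}]$ has length less than $\epsilon/4$'' are in general incompatible. Nothing in the definition of a Cantor complement bounds the size of individual components, so $U$ may well have a component $J$ of width $\geq \epsilon/4$. Including $J$ in the scaffold violates the first condition, while excluding it forces some gap to contain $J$ and hence to have length $\geq \epsilon/4$, violating the second. Your displacement estimate at the breakpoint $b_k$, namely $p_{k+1}-b_k<(b_{k+1}-a_{k+1})+(a_{k+1}-b_k)$, explicitly invokes the width bound on $I_{k+1}$, so the argument as written fails for small $\epsilon$. (Relatedly, the justification ``every nonempty open subinterval of $\ui$ contains infinitely many components of $U$'' is false for subintervals lying inside a single component of $U$.)

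The fix is easy: drop the width constraint entirely, make sure every component of width $\geq \epsilon/4$ is included in the scaffold (there are only finitely many, so small gaps can still be arranged between them), and choose both $p_k$ and $q_k$ in $(a_k,a_k+\epsilon/4)\cap I_k$. Then $|f(a_k)-a_k|=q_k-a_k<\epsilon/4$ and $|f(b_k)-b_k|=(p_{k+1}-a_{k+1})+(a_{k+1}-b_k)<\epsilon/4+\epsilon/4$, and everything else you wrote goes through unchanged. With this repair your construction uses a single $\bbz$-indexed family of components and verifies staggeredness via $f(C\setminus\{0,1\})\subseteq U$. The paper instead uses two interleaved families $I_n$ and $J_n$ (with $J_n$ in the gap between $I_n$ and $I_{n+1}$), contracts each $I_n$ slightly within itself while expanding each $J_n$ to cover the surrounding gap, and verifies the dual inclusion $C\setminus\{0,1\}\subseteq f(U)$. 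Your route, once corrected, is the more economical of the two.
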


\begin{proof}
Let $\epsilon>0$ and $\scru$ be the set of connected components of $U$ with the induced ordering. Since $U$ is a Cantor complement, $\scru$ is densely ordered and we may find a bi-infinite sequence $I_n=(a_n,b_n)\in\scru$, $n\in\bbz$ such that $\inf(\bigcup_{n\in\bbn}I_n)=0$, $\sup(\bigcup_{n\in\bbn}I_n)=1$, and such that for all $n\in\bbz$, the segment $[b_n,a_{n+1}]$ has diameter less than $\frac{\epsilon}{3}$. For each $n\in\bbz$, find $J_n=(c_n,d_n)\in \scru$ with $J\subseteq (b_n,a_{n+1})$. We will define $f$ by first setting its value on the closures of the intervals in the bi-infinite sequence \[\cdots <I_{-2}<J_{-2}<I_{-1}<J_{-1}<I_0<J_0<I_1<J_1<I_2<J_2<\cdots.\]

For each $n\in\bbz$, choose a subdivision of $[a_n,b_n]$ as follows: find $a_n<D_{n-1}<A_{n}<B_{n}<C_{n}<b_n$ where the outer four segments $[a_n,D_{n-1}]$, $[D_{n-1},A_n]$, $[B_n,C_n]$, and $[C_n,b_n]$ all have diameter less that $\frac{\epsilon}{3}$. Note that if $K_n=(A_n,B_n)$ and $L_n=(C_n,D_n)$, then the bi-infinite sequence \[\cdots <K_{-2}<L_{-2}<K_{-1}<L_{-1}<K_0<L_0<K_1<L_1<K_2<L_2<\cdots\] limits to $0$ on the left and $1$ on the right. Moreover, $K_n\subseteq I_n$ and $J_n\subseteq L_n$ for all $n\in\bbz$. We define $f$ so that it maps $[a_n,b_n]$ to $[A_n,B_n]$ and $[c_n,d_n]$ to $[C_n,D_n]$ respectively by increasing linear maps. Moreover, since the sets $I_n,J_n,K_n,L_n$ limit to $0$ as $n\to-\infty$ and to $1$ as $n\to\infty$, this definition extends uniquely to an increasing homeomorphism $f:\ui\to\ui$, which is piecewise-linear on $[a_{-n},b_{n}]$ for all $n\geq 1$ (see Figure \ref{figperturb}).

\begin{figure}[H]
\centering \includegraphics[height=2.3in]{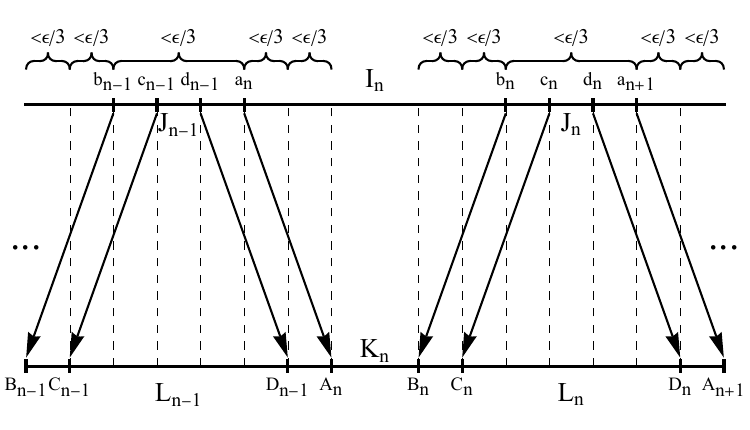}
\caption{\label{figperturb}An illustration of part of the homeomorphism $f:\ui\to\ui$ which maps $I_n\to K_n$ and $J_n\to L_n$ in a piecewise-linear fashion.}
\end{figure}

Our choices of the sizes of the intervals $[b_n,a_{n+1}]$ and subdivisions ensures that $\rho(f,id_{\ui})<\epsilon$. Additionally, if $t\in (0,1)\backslash U$, then $t\in [b_n,c_n]\cup [d_n,a_{n+1}]$ for some $n\in \bbz$. Since $[b_n,c_n]\cup [d_n,a_{n+1}]\subseteq L_n=f(J_n)$ where $J_n\in\scru$, we have $t\in f(U)$. We conclude that $U\cup f(U)=(0,1)$, i.e. $U$ and $f(U)$ are staggered.
\end{proof}

\begin{definition}
We say two Cantor paths $\alpha,\beta:\ui\to X$ are \textit{staggered} if $\mco(\alpha)$ and $\mco(\beta)$ are staggered.
\end{definition}

Our final lemma of the section allows us to take two non-staggered Cantor paths and perturb one of them so that the resulting pair is staggered.

\begin{lemma}\label{staggeredlemma}
Given any two Cantor paths $\alpha,\beta:\ui\to X$ and $\epsilon,\delta>0$, there exists an increasing homeomorphism $f:\ui\to \ui$ such that $\rho(f,id_{\ui})<\delta$, $\rho(\alpha,\alpha\circ f)<\epsilon$, and such that $\alpha\circ f$ and $\beta$ are staggered.
\end{lemma}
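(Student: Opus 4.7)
\smallskip

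The plan is to reduce the two metric requirements to a single one on $\rho(f, id_\ui)$ and then repeat the construction of Lemma \ref{lemmaccsetmove} using components of $\mco(\beta)$ in place of components of a second copy of $\mco(\alpha)$. First, by uniform continuity of $\alpha$ on the compact interval $[0,1]$, I choose $\eta > 0$ so that $|s-t|<\eta$ forces $d(\alpha(s),\alpha(t))<\epsilon$, and set $\delta' = \min(\delta,\eta)$. Then any increasing homeomorphism $f$ with $\rho(f,id_\ui)<\delta'$ automatically satisfies both $\rho(f,id_\ui)<\delta$ and $\rho(\alpha,\alpha\circ f)<\epsilon$, so the entire remaining task is to ensure that $f$ can be chosen with $\alpha\circ f$ and $\beta$ staggered. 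Because $f$ is a homeomorphism, $\mco(\alpha\circ f)=f^{-1}(\mco(\alpha))$, so the staggering condition $\mco(\alpha\circ f)\cup\mco(\beta)=(0,1)$ is equivalent to $\mco(\alpha)\cup f(\mco(\beta))=(0,1)$, i.e.\ to staggering the fixed Cantor complement $U_1=\mco(\alpha)$ with a small perturbation $f(U_2)$ of $U_2=\mco(\beta)$.

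Next I adapt the construction in Lemma \ref{lemmaccsetmove}. Using the Cantor complement $U_1$, I pick a bi-infinite sequence $I_n=(a_n,b_n)\in\scru_1$ of components of $U_1$ with $\inf\bigcup I_n = 0$, $\sup\bigcup I_n = 1$, and with each gap $[b_n,a_{n+1}]$ of diameter less than $\delta'/5$. In each gap $(b_n,a_{n+1})$ I then select a component $J_n=(c_n,d_n)\in\scru_2$ of $U_2$ whose closure lies in $(b_n,a_{n+1})$; such a component exists whenever $(b_n,a_{n+1})\cap([0,1]\setminus U_2)\neq\emptyset$, which is guaranteed since the complement of $U_2$ is a Cantor set with no isolated points (gaps where $(b_n,a_{n+1})\subseteq U_2$ are already covered and require no action, so $f$ is left as the identity there). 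Inside each $I_n$ I choose a subdivision $a_n<D_{n-1}<A_n<B_n<C_n<b_n$ with each of the outer pieces of diameter less than $\delta'/5$, and I define $f$ to map $\overline{J_n}$ linearly onto $\overline{L_n}=[C_n,D_n]$ and to map each intermediate segment $[d_{n-1},c_n]$ linearly onto $[D_{n-1},C_n]$. The matching of endpoints at the $c_n$ and $d_n$ makes this consistent, and the bi-infinite limits $a_n\to 0$, $b_n\to 1$ extend $f$ uniquely to an increasing homeomorphism of $[0,1]$.

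Finally I verify the two required properties. The sup-norm estimate $\rho(f,id_\ui)<\delta'$ follows from the diameter bounds: on $\overline{J_n}$ we have $|C_n-c_n|\le|C_n-b_n|+|b_n-c_n|<2\delta'/5$ and symmetrically for $D_n,d_n$, while on each intermediate segment the analogous estimate holds. For the staggering property $U_1\cup f(U_2)=(0,1)$, every point $t\in(0,1)\setminus U_1$ lies in some gap $[b_n,a_{n+1}]$, and by construction $L_n\supseteq [b_n,a_{n+1}]$ with $L_n=f(J_n)\subseteq f(U_2)$, so $t\in f(U_2)$.

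The main technical wrinkle is the possibility that some gap $(b_n,a_{n+1})$ sits entirely inside $U_2$, so that no component of $U_2$ has closure inside this gap. This is not a real obstruction because in such a gap the inclusion $(b_n,a_{n+1})\subseteq U_2=f(U_2)$ (with $f=id_\ui$ locally) already yields the required covering, but it does force the construction to be split into an ``active'' case and a ``trivial'' case, with some bookkeeping to maintain monotonicity of $f$ across the boundary. The remaining delicate point is checking continuity of the extension at the endpoints $0$ and $1$, which follows because both $|f(t)-t|<\delta'$ and the nested intervals $[a_{-n},b_n]$ exhaust $(0,1)$.
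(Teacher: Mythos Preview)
Your argument is essentially correct but takes a different route from the paper. You re-run the construction of Lemma~\ref{lemmaccsetmove}, drawing the $I_n$ from $U_1=\mco(\alpha)$ and the $J_n$ from $U_2=\mco(\beta)$, aiming directly for $U_1\cup f(U_2)=(0,1)$. The paper instead observes that $V=U_1\cap U_2$ is itself a Cantor complement and invokes Lemma~\ref{lemmaccsetmove} as a black box on~$V$: any $f$ with $V\cup f(V)=(0,1)$ also satisfies $V\cup f^{-1}(V)=(0,1)$, and hence
\[
(0,1)=(U_1\cap U_2)\cup\bigl(f^{-1}(U_1)\cap f^{-1}(U_2)\bigr)\subseteq U_2\cup f^{-1}(U_1)=\mco(\beta)\cup\mco(\alpha\circ f).
\]
This one-line reduction avoids all case analysis; in particular the ``trivial gap'' phenomenon you flag simply cannot arise, because both the $I_n$ and the $J_n$ inside Lemma~\ref{lemmaccsetmove} are drawn from the single set~$V$.

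The loose end in your version is precisely that trivial case. Declaring $f=id$ on a gap $(b_n,a_{n+1})\subseteq U_2$ does not splice cleanly into the piecewise-linear definition on the adjacent active gaps (there is no $d_n$ to anchor the segment $[d_n,c_{n+1}]$), and it can also fail to cover the endpoints $b_n,a_{n+1}$, which lie in $(0,1)\setminus U_1$ but may lie on $\partial U_2$ rather than in $U_2$. The cleanest patch within your framework is to drop the requirement that $J_n$ be a \emph{component} of $U_2$: since $U_2$ is open and dense, every gap $(b_n,a_{n+1})$ contains some open interval $J_n\subseteq U_2$ with $\overline{J_n}\subseteq(b_n,a_{n+1})$, and that inclusion is all your construction and your verification $[b_n,a_{n+1}]\subseteq L_n=f(J_n)\subseteq f(U_2)$ actually use. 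With this change the definition of $f$ is uniform across all $n$ and the monotonicity bookkeeping disappears.
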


\begin{proof}
Let $U_1=\mco(\alpha)$ and $U_2=\mco(\beta)$ and note that $V=U_1\cap U_2$ is also a Cantor complement. Find $0<\delta '<\delta$ such that $|s-t|<\delta'$ implies $|\alpha(s)-\alpha(t)|<\frac{\epsilon}{2}$. By Lemma \ref{lemmaccsetmove}, we may find an increasing homeomorphism $f:\ui\to \ui$ such that $|f(t)-t|<\delta'$ for all $t\in\ui$ and $V\cup f(V)=(0,1)$. Hence, $|\alpha(f(t))-\alpha(t)|<\frac{\epsilon}{2}$ for all $t\in\ui$, which gives $\rho(f,id_{\ui})\leq \delta' <\delta$ and $\rho(\alpha,\alpha\circ f)<\epsilon$. Also, note that $V\cup f^{-1}(V)=(0,1)$ and $\mco(\alpha\circ f)=f^{-1}(U_1)$. Thus
\[(0,1)=V\cup f^{-1}(V)=(U_1\cap U_2)\cup (f^{-1}(U_1)\cap f^{-1}(U_2))\subseteq U_2\cup\mco(\alpha\circ f),\]showing that $\alpha\circ f$ and $\beta$ are staggered.
\end{proof}

\section{Inserting inverse pairs into Cantor paths}\label{sec: inserting inverse pairs}

If $\scru$ is a collection of open intervals in $\ui$ with disjoint closures, then a \textit{$\scru$-collapsing map} is a non-decreasing, continuous surjection $\mu:\ui\to\ui$, which is constant on the closure of each $J\in\scru$ and which is bijective on  $\ui\backslash \bigcup_{J\in\scru}\ov{J}$. Such maps may be constructed canonically using dyadic rational outputs and by enumerating $\scru$ by non-increasing length and the ordering in $\ui$ (lexicographically). If a path $\alpha:\ui\to X$ is not light and $k_{\alpha}:\ui\to\ui$ is a $\lc(\alpha)$-collapsing map, then there is a unique light path $\alpha
^{\light}:\ui\to X$ such that $\alpha^{\light}\circ k_{\alpha}=\alpha$.

\begin{definition}
We call a given loop $\ell:\ui\to X$ an \textit{inverse-pair loop} if there exists a path $\alpha:\ui\to X$ such that $\ell\equiv \alpha\ov{\alpha}$. More specifically,
\begin{enumerate}
\item if $\alpha$ is a Cantor path or has the form $\alpha\equiv \alpha_1 \alpha_2$ for Cantor path $\alpha_1$ and constant path $\alpha_2$, then, we call $\ell$ a \textit{Cantor-inverse-pair loop} or \textit{CIP-loop} (see Figure \ref{figcip}).
\item if $\alpha$ is light, we call $\ell$ a \textit{light-inverse-pair loop} or \textit{LIP-loop}.
\end{enumerate}
\end{definition}

\begin{figure}[H]
\centering \includegraphics[height=1.5in]{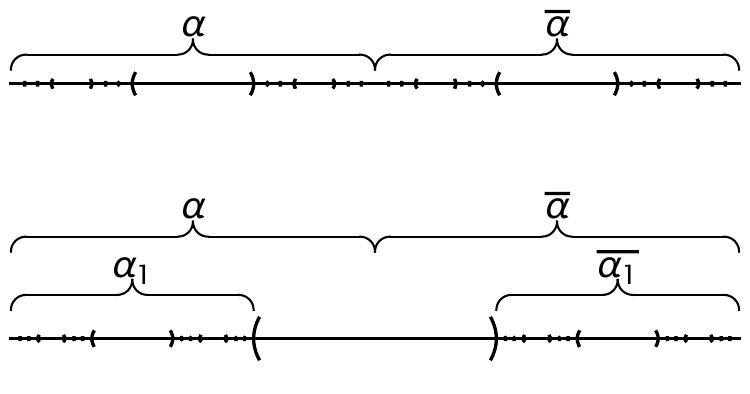}
\caption{\label{figcip}A CIP-loop is an inverse pair loop which either has the form of an inverse pair $\alpha\ov{\alpha}$ of Cantor paths (above) or of the form $\alpha_1 \beta  \ov{\alpha_1}$ for a Cantor path $\alpha_1$ and constant path $\beta$ (below).}
\end{figure}

\begin{remark}\label{liploopremark}
If $\alpha:\ui\to X$ is a CIP-loop, then $\alpha$ is a Cantor path. Moreover, if $\alpha=\alpha^{\light}\circ k_{\alpha}$ for $\lc(\alpha)$-collapsing map $k_{\alpha}:\ui\to\ui$ and light path $\alpha^{\light}:\ui\to X$, then $\alpha^{\light}$ is a LIP-loop.
\end{remark}

\begin{definition}\label{orderdef}
Let $\alpha,\beta:\ui\to X$ be Cantor paths and $\scru\subseteq\lc(\alpha)$. We say that $\beta$ is a \textit{$\scru$-extension} of $\alpha$ if
\begin{enumerate}
\item $\beta(t)=\alpha(t)$ for all $t\in\ui\backslash \bigcup\scru$,
\item for each $J\in \scru$, $\beta|_{\ov{J}}$ is a CIP-loop.
\end{enumerate}
If $\beta$ is a $\scru$-extension of $\alpha$ for some subset $\scru\subseteq\lc(\alpha)$, we write $\alpha\ord\beta$.
\end{definition}

\begin{remark}\label{extremark}
If $\alpha\ord\beta$, then $\mco(\beta)\subseteq \mco(\alpha)$. In particular, if $J\in \lc(\beta)$, then either $J\in \lc(\alpha)\backslash\scru$ or $J\subseteq K$ for some $K\in\scru$. Moreover, the loop $\alpha\ov{\beta}$ factors through a dendrite in a highly structured way. In particular, there is a dendrite $D(\beta,\alpha)$ constructed by starting with a ``base arc" $B$ and attaching an arc to $B$ for each element of $\scru$. We have $F\circ (a\ov{b})=\alpha\ov{\beta}$ where $a:\ui\to D(\beta,\alpha)$ is a monotone map onto $B$ and where $b:\ui\to D(\beta,\alpha)$ maps $\ui\backslash\bigcup\scru$ into $B$ and the closure of each element of $\scru$ onto the corresponding attached arc in $D(\beta,\alpha)$ by an inverse-pair loop. While $\alpha\den\beta$ holds, the relation $\alpha\leq_{\bbr}\beta$ only holds in the trivial case where $\scru=\emptyset$ (see the Section \ref{appendix} for more details).
\end{remark}

\begin{remark}
The relation $\ord$ on the set of paths in a space $X$ is certainly reflexive and it is straightforward to check that $\ord$ antisymmetric. However, $\ord$ is \textit{not} transitive. Rather, $\ord$ is a very fine relation that generates a partial order relation, which is strictly finer than $\leq_{\bbr}$.
\end{remark}

If we have a sequence $\gamma_1\ord\gamma_{2}\ord \gamma_{3}\ord\cdots$ in a space $X$ where, as one proceeds through the sequence, the added out-and-back loops have very quickly shrinking diameters, then $\{\gamma_{n}\}_{n\in\bbn}$ should converge uniformly to a path $\gamma$. Moreover, $\gamma_1\ov{\gamma_{2}}$ factors through a dendrite, call it $D_1$, as described in Remark \ref{extremark}. Since $\gamma_{n+1}$ agrees with $\gamma_{n}$ except on portions on which $\gamma_{n+1}$ is a CIP-loop, we may recursively construct a dendrite $D_n$ by attaching arcs to $D_{n-1}$ so that $\gamma_j\ov{\gamma_{n+1}}$ factors through $D_{n}$ for each $j\in\{1,2,\dots,n\}$. In the limit, we find that there is a uniquely determined limit dendrite $D_{\infty}=\varprojlim_{n}D_n$ that $\gamma_{n}\ov{\gamma}$ factors through for all $n$. Hence, $\gamma\den\gamma_n$ for all $n\in\bbn$. The next lemma establishes this conclusion assuming the existence of the limit $\gamma$. As one can see from the above proof sketch, this result is fairly intuitive. However, a detailed proof requires careful bookkeeping of parameterizations of paths in inverse limits. We omit the details here and include them in the Appendix, Section \ref{appendix}.

\begin{lemma}\label{gammaextensionlemma}
If $\{\gamma_n\}_{n\in\bbn}$ is a sequence of Cantor paths in a metric space $(X,d)$ such that $\gamma_n\ord\gamma_{n+1}$ for all $n\in\bbn$ and such that $\{\gamma_n\}_{n\in\bbn}\to \gamma$ uniformly, then $\gamma_n\den\gamma$ for all $n\in\bbn$.
\end{lemma}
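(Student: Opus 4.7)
The plan is to realize $\gamma_n\den\gamma$ by producing an $\bbr$-tree $D_\infty$, a map $F_\infty:D_\infty\to X$, and for each $n$ a loop $\ell_n:\ui\to D_\infty$ with $F_\infty\circ\ell_n=\gamma_n\ov{\gamma}$. I would build $D_\infty$ as an inverse limit of a tower of dendrites that records the CIP-loop extensions $\gamma_n\ord\gamma_{n+1}$ at all levels simultaneously.

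First I invoke Remark \ref{extremark} at each stage: writing $\scru_n\subseteq \lc(\gamma_n)$ for the subfamily witnessing $\gamma_n\ord\gamma_{n+1}$, the remark furnishes a dendrite $D(\gamma_{n+1},\gamma_n)$ consisting of a base arc (parameterized monotonically by $\gamma_n$) with an arc attached for each $J\in\scru_n$ (traversed out-and-back by the CIP-loop $\gamma_{n+1}|_{\ov{J}}$), together with a map to $X$ factoring both $\gamma_n$ and $\gamma_{n+1}$. To combine all stages, I inductively build a nested sequence $D_1\subseteq D_2\subseteq\cdots$ by attaching, at stage $n+1$, the new arcs furnished by $D(\gamma_{n+2},\gamma_{n+1})$ onto $D_n$ at the points corresponding to where $\gamma_{n+1}$ sits on the subintervals of $\scru_{n+1}$. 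The resulting $D_n$ carries a map $F_n:D_n\to X$ and parameterizing paths making $\gamma_j\ov{\gamma_{n+1}}$ factor through $D_n$ for every $j\le n$. I then set $D_\infty=\ilim_n D_n$ with respect to the monotone retractions $\pi_n:D_{n+1}\to D_n$ that collapse the newly attached arcs to their attachment points; since each $\pi_n$ is a monotone quotient map between dendrites, $D_\infty$ is itself a dendrite, hence an $\bbr$-tree.

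Next I would assemble $F_\infty$ using the compatibility of the $F_n$ on each common subdendrite $D_{n-1}\subseteq D_n$, together with the observation that the new arcs introduced at stage $n+1$ carry $X$-values inside the images of CIP-loops $\gamma_{n+1}|_{\ov{J}}$ whose diameters are at most $2\rho(\gamma_{n+1},\gamma_n)$ and hence tend to $0$ by the uniform convergence $\gamma_n\to\gamma$. Passing to the limit of the parameterizing paths yields a path $p_\infty:\ui\to D_\infty$ with $F_\infty\circ p_\infty=\gamma$, produced as the uniform limit on the base arc together with the stacked inverse-pair detours, and for each fixed $n$ a loop $\ell_n:\ui\to D_\infty$ with $F_\infty\circ \ell_n=\gamma_n\ov{\gamma}$, giving $\gamma_n\den\gamma$.

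The main obstacle, as the authors flag, is the bookkeeping for parameterizations. At each stage a $\scru_n$-collapsing map on $\ui$ must be threaded coherently through the tower so that (i) the attachment point of each new arc lies in $D_n$ at the location dictated by the parameterization of $\gamma_{n+1}$, and (ii) the limiting parameterization of $\gamma$ restricts to the base arc as the uniform limit of the base-arc components of $\gamma_n$. The continuity of $F_\infty$ must also be verified with care, by distinguishing points of $D_\infty$ whose defining threads eventually stabilize inside some fixed $D_n$ (where continuity is inherited from $F_n$) from points whose threads enter later and later attached arcs (where the shrinking CIP-loop diameters supplied by uniform convergence provide the estimate). Once these verifications are recorded, the lemma follows immediately from the $\bbr$-tree structure of $D_\infty$.
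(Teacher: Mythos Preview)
Your overall architecture matches the paper's: build a tower of dendrites $D_1\subseteq D_2\subseteq\cdots$ recording the successive CIP-extensions, pass to the inverse limit $D_\infty$ along the monotone collapsing retractions, and factor each $\gamma_n\ov{\gamma}$ through $D_\infty$. The paper carries this out via pushouts and the machinery of Lemma \ref{iterationlemma}, but the skeleton is exactly what you describe.

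Where your sketch diverges is in the construction of $F_\infty$. You propose to assemble it from the compatible $F_n$ and to handle ``unstable'' threads using the bound $\diam\bigl(\gamma_{n+1}(\ov{J})\bigr)\le 2\rho(\gamma_{n+1},\gamma_n)$. That bound is correct, but uniform convergence only gives $\rho(\gamma_{n+1},\gamma_n)\to 0$, not summability; so for a thread $(d_n)$ that climbs onto new arcs infinitely often, the increments $d\bigl(F_{n+1}(d_{n+1}),F_n(d_n)\bigr)$ need not be summable and $\{F_n(d_n)\}$ need not be Cauchy from your estimate alone (and $X$ is not assumed complete). The paper sidesteps this entirely: it first proves that the limit parameterization $Q_\infty\circ k_\infty:\ui\to D_\infty$ is \emph{surjective} (this is the real content of the ``bookkeeping''), then observes that $\gamma$ is constant on each fiber of $Q_\infty\circ k_\infty$ simply because each $\gamma_n$ is and $\gamma_n\to\gamma$. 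The map $H_\infty:D_\infty\to X$ then exists, with continuity automatic, by the universal property of the quotient $\ui\to D_\infty$. In effect, surjectivity of the parameterization replaces your diameter argument and simultaneously delivers the path $p_\infty$ with $H_\infty\circ p_\infty=\gamma$. If you patch your outline by inserting that surjectivity step, the rest of your plan goes through and coincides with the paper's proof.
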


In the next section, we will be forming alternating sequences $\alpha_1\ord \alpha_{1}'\equiv \alpha_2\ord \alpha_2'\equiv \cdots$ where the equivalences are given by small perturbations. The next lemma, which is proved using elementary real analysis, allows us to manage all of these perturbations simultaneously.

\begin{lemma}\label{infinitecompositionlemma}
Suppose $\{f_n\}_{n\in\bbn}$ is a sequence of increasing homeomorphisms $f_n:\ui\to\ui$ such that $\rho(f_n,id_{\ui})\leq ar^n$ for some $a>0$ and $|r|<1$. If $g_n=f_{n}^{-1}\circ f_{n-1}^{-1}\circ \cdots \circ f_{2}^{-1}\circ f_{1}^{-1}$ for all $n\in\bbn$, then $\{g_n\}_{n\in\bbn}$ converges uniformly to a continuous, non-decreasing surjection $g_{\infty}:\ui\to\ui$.
\end{lemma}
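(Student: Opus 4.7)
The plan is to show that $\{g_n\}$ is uniformly Cauchy by bounding the consecutive differences $\rho(g_n,g_{n-1})$ by a geometric tail, and then to read off the properties of the limit directly from those of the $g_n$.

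The first key observation is that for any increasing homeomorphism $f:\ui\to\ui$, one has $\rho(f^{-1},id_{\ui})=\rho(f,id_{\ui})$: writing $s=f(t)$ gives $|s-f^{-1}(s)|=|f(t)-t|$, so the suprema over $s\in\ui$ and $t\in\ui$ agree. In particular, $\rho(f_n^{-1},id_{\ui})\leq ar^n$ for every $n$. Next, since $g_n=f_n^{-1}\circ g_{n-1}$ and $g_{n-1}(\ui)\subseteq \ui$, I would estimate
\[\rho(g_n,g_{n-1})=\sup_{t\in\ui}|f_n^{-1}(g_{n-1}(t))-g_{n-1}(t)|\leq \sup_{u\in\ui}|f_n^{-1}(u)-u|=\rho(f_n^{-1},id_{\ui})\leq ar^n.\]

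With this bound in hand, a standard telescoping argument using $\sum ar^n<\infty$ (since $|r|<1$) shows that $\{g_n\}$ is uniformly Cauchy in the complete metric space of continuous maps $\ui\to\ui$ with the sup metric; hence it converges uniformly to some continuous $g_\infty:\ui\to\ui$. It remains to read off the additional properties. Each $g_n$, as a composition of increasing homeomorphisms of $\ui$ to itself, is an increasing homeomorphism and in particular non-decreasing with $g_n(0)=0$ and $g_n(1)=1$. Pointwise limits of non-decreasing functions are non-decreasing, so $g_\infty$ is non-decreasing; uniform convergence preserves the endpoint values $g_\infty(0)=0$ and $g_\infty(1)=1$; and then the Intermediate Value Theorem, applied to the continuous $g_\infty$, yields surjectivity onto $\ui$.

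There is no real obstacle here — the argument is elementary. The only mildly subtle point worth flagging is the identity $\rho(f^{-1},id_{\ui})=\rho(f,id_{\ui})$ used at the outset; without it, iterating inverses would look awkward, but with it the telescoping estimate is immediate and the geometric decay $ar^n$ does all of the work.
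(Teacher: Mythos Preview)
Your argument is correct and is exactly the elementary real-analysis verification the paper alludes to; the paper itself omits the proof entirely, merely remarking that it ``is proved using elementary real analysis,'' so there is nothing to compare against beyond confirming that your telescoping estimate via $\rho(f_n^{-1},id_{\ui})=\rho(f_n,id_{\ui})\leq ar^n$ and the endpoint-plus-monotonicity passage to the limit are the intended steps.
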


Next, we combine the previous two lemmas.

\begin{lemma}\label{alternatinglemma}
Suppose $\{\alpha_n\}_{n\in\bbn}$ and $\{\alpha_{n}'\}_{n\in\bbn}$ are sequences of Cantor paths in a metric space $(X,d)$, $\gamma:\ui\to X$ is a path, and $\{f_n\}_{n\in\bbn}$ is a sequence of increasing homeomorphisms $f_n:\ui\to\ui$ such that the following hold:
\begin{enumerate}
\item $\{\alpha_n\}_{n\in\bbn}\to \gamma$ uniformly,
\item $\alpha_n\ord\alpha_n'$ for all $n\in\bbn$,
\item $\alpha_{n+1}=\alpha_{n}'\circ f_n$ for all $n\in\bbn$,
\item there exists $a>0$ and $|r|<1$ such that $\rho(f_n,id_{\ui})\leq ar^n$ for all $n\in\bbn$.
\end{enumerate}
Then $\alpha_n\den\gamma$ for all $n\in\bbn$.
\end{lemma}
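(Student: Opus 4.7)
The plan is to reparameterize the sequence $\{\alpha_n\}$ so that the alternating list of $\ord$-extensions and perturbations $f_n$ becomes a genuine $\ord$-chain, and then invoke Lemmas \ref{gammaextensionlemma} and \ref{denproperties}(3). Set $g_0 = id_{\ui}$ and $g_n = f_n^{-1} \circ f_{n-1}^{-1} \circ \cdots \circ f_1^{-1}$ for $n \geq 1$. By hypothesis (4) and Lemma \ref{infinitecompositionlemma}, the sequence $\{g_n\}$ converges uniformly to a non-decreasing continuous surjection $g_\infty : \ui \to \ui$. Define $\beta_n = \alpha_n \circ g_{n-1}$; since $g_{n-1}$ is an increasing homeomorphism, each $\beta_n$ is again a Cantor path.

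The first task is to verify $\beta_n \ord \beta_{n+1}$ for every $n$. Hypothesis (3) gives $\alpha_{n+1} \circ f_n^{-1} = \alpha_n'$, so
\[ \beta_{n+1} = \alpha_{n+1} \circ f_n^{-1} \circ g_{n-1} = \alpha_n' \circ g_{n-1}. \]
By hypothesis (2), $\alpha_n \ord \alpha_n'$, and reparameterization by an increasing homeomorphism preserves $\ord$: if $\scru \subseteq \lc(\alpha_n)$ witnesses $\alpha_n \ord \alpha_n'$, then $\{g_{n-1}^{-1}(J) : J \in \scru\} \subseteq \lc(\beta_n)$ witnesses $\beta_n \ord \beta_{n+1}$, since reparameterizing a CIP-loop by an increasing homeomorphism again yields a CIP-loop.

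Next I would show $\beta_n \to \gamma \circ g_\infty$ uniformly. By the triangle inequality,
\[ \rho(\beta_n, \gamma \circ g_\infty) \leq \rho(\alpha_n \circ g_{n-1}, \gamma \circ g_{n-1}) + \rho(\gamma \circ g_{n-1}, \gamma \circ g_\infty). \]
The first summand equals $\rho(\alpha_n, \gamma)$ because $g_{n-1}$ is a surjection onto $\ui$, and hence tends to $0$ by hypothesis (1); the second tends to $0$ by uniform continuity of $\gamma$ on the compact space $\ui$ combined with $g_{n-1} \to g_\infty$ uniformly. Lemma \ref{gammaextensionlemma}, applied to the $\ord$-chain $\{\beta_n\}$ with uniform limit $\gamma \circ g_\infty$, then delivers $\beta_n \den \gamma \circ g_\infty$ for every $n$, i.e., $\alpha_n \circ g_{n-1} \den \gamma \circ g_\infty$. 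Since $g_{n-1}$ and $g_\infty$ are non-decreasing continuous surjections $\ui \to \ui$, Lemma \ref{denproperties}(3) concludes $\alpha_n \den \gamma$.

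The only real subtlety is the indexing: choosing the offset $\beta_n = \alpha_n \circ g_{n-1}$ (rather than $\alpha_n \circ g_n$) is forced by the identity $\alpha_{n+1} \circ f_n^{-1} = \alpha_n'$, which is precisely what converts each mixed step of the original sequence into a bona fide $\ord$-extension. Beyond this bookkeeping, the argument is just a triangle-inequality estimate followed by invocations of the three preceding lemmas; transitivity of $\den$ is deliberately avoided by routing the final step through Lemma \ref{denproperties}(3).
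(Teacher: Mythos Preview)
Your proof is correct and follows essentially the same route as the paper's: define $g_n=f_n^{-1}\circ\cdots\circ f_1^{-1}$, set $\beta_n=\alpha_n\circ g_{n-1}$ (the paper calls these $\gamma_n$), verify $\beta_n\ord\beta_{n+1}$ and $\beta_n\to\gamma\circ g_\infty$, and apply Lemma~\ref{gammaextensionlemma}. The only cosmetic difference is the last step: the paper passes through Fr\'echet equivalences and Lemma~\ref{denproperties}(4), whereas you invoke Lemma~\ref{denproperties}(3) directly---either works.
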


\begin{proof}
Let $g_0=id_{\ui}$ and $g_n=f_{n}^{-1}\circ f_{n-1}^{-1}\circ \cdots \circ f_{2}^{-1}\circ f_{1}^{-1}$ for all $n\in\bbn$. Note that $f_{n}\circ g_{n}=g_{n-1}$ for all $n\in\bbn$. By Lemma \ref{infinitecompositionlemma}, $\{g_n\}_{n\in\bbn}$ converges uniformly to a continuous, non-decreasing surjection $g_{\infty}:\ui\to\ui$.

Let $\gamma_1=\alpha_1$ and for $n\geq 2$, set $\gamma_n=\alpha_n\circ g_{n-1}$. For the moment, fix $n\in\bbn$. Since $\alpha_{n+1}=\alpha_{n}'\circ f_n$, we have \[\gamma_{n+1}=\alpha_{n+1}\circ g_{n}=\alpha_{n}'\circ f_n\circ g_n=\alpha_{n}'\circ g_{n-1}.\] By assumption, $\alpha_{n}\ord \alpha_{n}'$. Composing $\alpha_{n}$ and $ \alpha_{n}'$ with the homeomorphism $g_{n-1}$ gives $\gamma_{n}\ord\gamma_{n+1}$. Additionally, note that since $\{\alpha_n\}_{n\in\bbn}\to \gamma$ and $\{g_{n-1}\}_{n\in\bbn}\to g_{\infty}$ uniformly, we have $\{\gamma_n\}_{n\in\bbn}=\{\alpha_n\circ g_{n-1}\}_{n\in\bbn}\to \gamma\circ g_{\infty}$. It now follows from Lemma \ref{gammaextensionlemma} that $\gamma_n\den\gamma\circ g_{\infty}$ for all $n\in\bbn$. Thus $\alpha_n\circ g_{n-1}\den \gamma\circ g_{\infty}$ for all $n\in\bbn$. Since we have Fr\'echet equivalences  (see Definition \ref{deffrechet}) $\alpha_n\tripproxs \alpha_n\circ g_{n-1}$ and $\gamma\tripproxs \gamma\circ g_{\infty}$, it follows from Lemma \ref{denproperties} (4) that $\alpha_{n}\den \gamma$ for all $n\in\bbn$.
\end{proof}
\section{A Proof of the Main Theorem}\label{sec: proof of main theorem}

To begin, we fix Cantor-path parameterizations of planar line segments. Recall that $\tau:\ui\to\ui$ is the ternary Cantor map.
\begin{definition}\label{lxydef}
Given points $x,y$ in the closed unit disc $\bbd^2$, let $L_{x,y}:\ui\to\bbd^2$ be the path defined by $L_{x,y}(s)=\tau(s)(\frac{x+y}{2})+(1-\tau(s))x$.
\end{definition}

\begin{remark}\label{linearpathremark}
If $x=y$, then $L_{x,y}$ is constant. If $x\neq y$, then $L_{x,y}$ is a Cantor path that parameterizes the line segment from $x$ to $\frac{x+y}{2}$. Moreover, if $x_1,x_2,y_1,y_2\in \bbd^2$ with midpoints $m_i=\frac{x_i+y_i}{2}$, then since the paths $L_{x_1,y_1}$ and $\ov{L_{y_2,x_2}}$ are both parameterized using $\tau$, their sup-distance is the maximum distance between the endpoints, that is,
\begin{eqnarray*}
\rho(L_{x_1,y_1},\ov{L_{y_2,x_2}}) &\leq& \max\left\{d(x_1,m_2),d(m_1,y_2)\right\}\\
&\leq & \max\left\{\frac{d(x_2,y_2)}{2}+d(x_1,x_2),\frac{d(x_1,y_1)}{2}+d(y_1,y_2)\right\}
\end{eqnarray*}
(See Figure \ref{fig1}). In the case that $x=x_1=x_2$ and $y=y_1=y_2$, we have $\rho(L_{x,y},\ov{L_{y,x}})=\frac{1}{2}d(x,y)$.
\end{remark}

\begin{figure}[H]
\centering \includegraphics[height=2.3in]{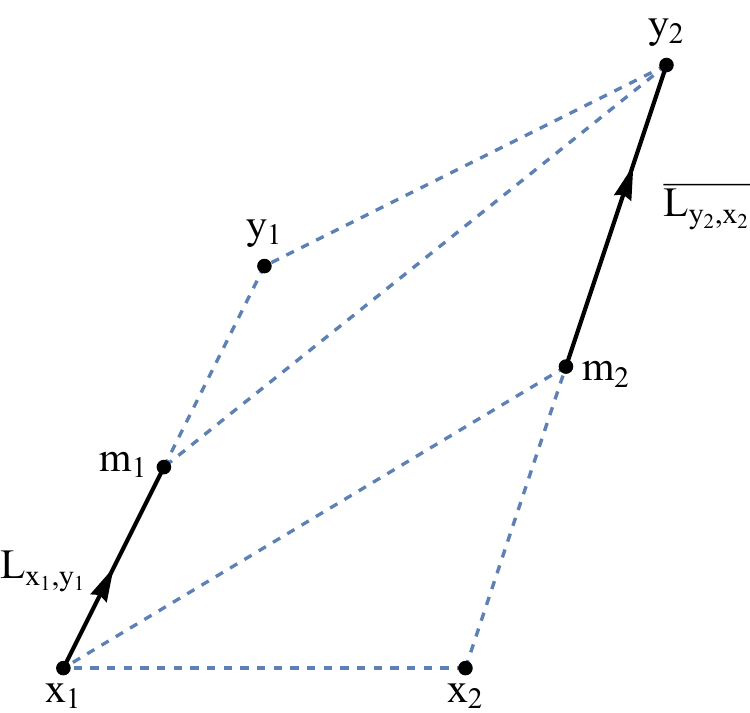}
\caption{\label{fig1}Since $L_{x_1,y_1}$ and $\ov{L_{y_2,x_2}}$ are both parameterized by $\tau$, their metric distance is the maximum length between starting and ending points.}
\end{figure}

We attribute the next lemma especially to the third author. The main idea is to fix staggered Cantor paths $\alpha,\beta:\ui\to \bbd^2$ satisfying $\alpha(i)=\beta(i)$, $i\in\{0,1\}$ and modify both of them by inserting CIP-loops on a $\bbz$-ordered sequence of elements of $\lc(\alpha)$ and $\lc(\beta)$ respectively so that $\alpha\ord\alpha '$ and $\beta\ord\beta '$. Specifically, we construct $\alpha '$ from $\alpha$ by inserting a bi-infinite sequence of CIP-loops of the form $L_{x,y} \ov{L_{x,y}}$ or $(L_{x,y}) c( \ov{L_{x,y}})$ (for constant path $c$) on certain elements of $\lc(\alpha)$. We will construct $\beta '$ from $\beta$ in an analogous way. However, the two constructions are not symmetric. Rather, they must be done simultaneously in an ``interlocking" fashion. In the end, the resulting paths $\alpha '$ and $\beta'$ will have $2/3$ the sup-distance of the original paths. That one can insert non-constant portions into both paths and somehow shrink the distance between them is somewhat non-intuitive and is only possible because the paths are \textit{staggered}.

Since the construction of $\alpha '$ and $\beta '$ will involve an intricate arrangement of overlapping intervals, we employ the following notation: if $I\subseteq \ui$ is an interval, then $\ell(I)$ and $r(I)$ will denote the left and right endpoints of $I$ respectively.

\begin{lemma}\label{fabelsconstruction}
For staggered Cantor paths $\alpha,\beta:\ui\to \bbd^2$ such that $\alpha(i)=\beta(i)$, $i\in\{0,1\}$, there exists Cantor paths $\alpha ',\beta ':\ui\to\bbd^2$ such that:
\begin{enumerate}
\item $\alpha\ord \alpha '$ and $\rho(\alpha,\alpha ')\leq \frac{1}{2}\rho(\alpha,\beta)$,
\item $\beta\ord \beta '$ and $\rho(\beta,\beta ')\leq \frac{1}{2}\rho(\alpha,\beta)$,
\item $\rho(\alpha ',\beta ')\leq  \frac{2}{3}\rho(\alpha,\beta)$.
\end{enumerate}
\end{lemma}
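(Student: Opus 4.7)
The strategy is to exploit the interlocking structure forced by the staggered condition, so that the CIP-loops inserted into $\alpha$ and $\beta$ converge to a common midpoint on their shared plateaus. First, apply Lemma \ref{choosezsetlemma} to the Cantor complements $\mco(\alpha)$ and $\mco(\beta)$ to obtain $\bbz$-indexed sub-collections $\scru_1 \subseteq \lc(\alpha)$ and $\scru_2 \subseteq \lc(\beta)$ whose unions remain staggered. On each $J \in \scru_1$ the path $\alpha$ is constant with value $x_J$, and on each $K \in \scru_2$ the path $\beta$ is constant with value $y_K$. The staggering forces every $J \in \scru_1$ to meet some $K \in \scru_2$, and conversely; using the $\bbz$-orderings of the two families I fix compatible matchings $K(J), J(K)$ so that for each matched intersecting pair the midpoint $\frac{x_J + y_{K(J)}}{2}$ equals $\frac{x_{J(K)} + y_K}{2}$.

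On each $\ov{J}$ with $J \in \scru_1$, define $\alpha'|_{\ov{J}}$ to be the CIP-loop $L_{x_J,y_{K(J)}} \cdot c_J \cdot \ov{L_{x_J,y_{K(J)}}}$, where $c_J$ is the constant path at the midpoint $m_J = \frac{x_J + y_{K(J)}}{2}$. The three sub-paths are parameterized over $\ov{J}$ so that $c_J$ occupies exactly the overlap $\ov{J \cap K(J)}$ while the two copies of $L_{x_J,y_{K(J)}}$---each parameterized via the ternary Cantor map $\tau$ as in Definition \ref{lxydef}---occupy the two outer pieces of $\ov{J} \setminus \ov{K(J)}$. Set $\alpha' = \alpha$ off $\bigcup \scru_1$, and define $\beta'$ symmetrically using the matched $J(K)$. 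The relations $\alpha \ord \alpha'$ and $\beta \ord \beta'$ follow at once from Definition \ref{orderdef} since the inserts on each modified interval are CIP-loops and the two paths agree with $\alpha, \beta$ outside. The first two metric bounds are immediate: on each $\ov{J}$ the image of $\alpha'$ lies on the segment from $x_J$ to $m_J$, of diameter $\frac{1}{2}|x_J - y_{K(J)}| \leq \frac{1}{2}\rho(\alpha,\beta)$, since $y_{K(J)} = \beta(s)$ and $\alpha(s) = x_J$ for any $s \in J \cap K(J)$; the bound on $\rho(\beta, \beta')$ is symmetric.

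The principal obstacle is the bound $\rho(\alpha',\beta') \leq \frac{2}{3}\rho(\alpha,\beta)$. On the central plateau $\ov{J \cap K(J)}$ both paths sit at the common midpoint $m_J$ by the matching, contributing distance $0$; this is the payoff of the shared-midpoint construction. On the outer strips of $\ov{J} \setminus \ov{K(J)}$ (and symmetrically for $\beta'$), exactly one of $\alpha', \beta'$ is traversing an $L_{\cdot,\cdot}$-path while the other sits at or near a known constant value; Remark \ref{linearpathremark} together with the symmetry identity $\tau(s) + \tau(1-s) = 1$ of the ternary Cantor map controls the resulting distance, and a careful case analysis balancing the worst-case ``half-moved plus residual-gap'' contributions yields the stated factor $\frac{2}{3}$. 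The main technical hurdle is arranging the matching $K(J), J(K)$ and the associated sub-parameterizations coherently across the $\bbz$-indexed families---so that every overlap is captured by a single shared midpoint and no boundary region between successive inserts degrades the sup-bound---together with verifying that $\alpha'$ and $\beta'$ are Cantor paths (their new locally-constant sets remain dense with $\bbq$-ordered component sets).
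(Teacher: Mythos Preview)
Your construction has a genuine gap: it is too coarse to achieve the bound $\rho(\alpha',\beta')\leq \frac{2}{3}\rho(\alpha,\beta)$, and the picture of the overlap geometry is not quite right.

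First, in the staggered $\bbz$-indexed configuration each $J\in\scru_1$ meets \emph{two} consecutive elements of $\scru_2$, one on each end (in the paper's notation $A_n$ meets both $C_{n-1}$ and $C_n$). Thus $J\setminus K(J)$ is a single interval on one side of the overlap, not ``two outer pieces'', and a one-to-one matching $K(J)$ cannot arrange a shared midpoint on both overlaps of $J$ simultaneously.

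Second, and more seriously, consider a point $t$ in the gap $[r(J),\ell(J')]$ between two consecutive $J,J'\in\scru_1$. By staggering this gap lies in some $K\in\scru_2$, so $\beta(t)=y_K$ is constant there; but $\alpha$ is \emph{not} constant on this gap and can take any value with $|\alpha(t)-y_K|\leq\delta$. In your construction $\alpha'(t)=\alpha(t)$, while $\beta'(t)$ lies on the segment from $y_K$ to $\tfrac{1}{2}(y_K+x_{J(K)})$. Nothing forces $\alpha(t)$ to be near either endpoint of that segment, so $|\alpha'(t)-\beta'(t)|$ can be as large as $\delta$ (or even larger if $x_{J(K)}$ points away from $\alpha(t)$). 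The ``half-moved plus residual-gap'' heuristic does not rescue this: the residual gap here is the full $\delta$.

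The paper's remedy is exactly what your scheme is missing. Inside each gap $[r(A_n),\ell(A_{n+1})]$ one selects \emph{additional} plateau intervals $B_{n,1}<\cdots<B_{n,k_n}$ of $\alpha$, spaced finely enough that $\diam(\alpha(\ov{I}))<\delta/6$ on each piece $I$ between consecutive $B_{n,j}$'s, and inserts a CIP-loop $L_{x_{n,j},y_n}\ov{L_{x_{n,j},y_n}}$ on each $\ov{B_{n,j}}$ (symmetrically, $D_{n,j}$'s for $\beta$). On each $\ov{A_n}$ itself one inserts not a single CIP-loop but a chain of them targeted at the successive $z_{n,j}$'s and $y_n$'s. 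It is precisely this $\delta/6$-fine interlocking---a second tier of inserted loops beyond the $\bbz$-indexed families---that produces the $\frac{\delta}{2}+\frac{\delta}{6}\leq\frac{2\delta}{3}$ estimates in the case analysis. Without it the third conclusion fails.
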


\begin{proof}
Let $\delta=\rho(\alpha,\beta)$. By assumption, $\mco(\alpha)$ and $\mco(\beta)$ are staggered Cantor complements. By Lemma \ref{choosezsetlemma}, we may select a set of connected components $\scru_1$ of $\mco(\alpha)$ and $\scru_2$ of $\mco(\beta)$ such that for $i\in\{0,1\}$, $\scru_i$ has the order type of $\bbz$, $\inf(\bigcup\scru_i)=0$, $\sup(\bigcup\scru_i)=1$, and such that $\bigcup\scru_1$ and $\bigcup\scru_2$ are staggered. Index the elements of $\scru_1$ as $\cdots <A_{-2}<A_{-1}<A_0<A_1<A_2<\cdots$ and the elements of $\scru_2$ as $\cdots <C_{-2}<C_{-1}<C_0<C_1<C_2<\cdots$ so that $C_{n}$ meets $A_{n-1}$ and $A_n$. Set $w_n=\alpha(\ov{A_n})$ and $y_n=\beta(\ov{C_n})$

For the moment, fix $n\in\bbz$. Since $\mco(\alpha)$ is a Cantor complement and $\alpha$ is uniformly continuous, we may find a sequence $B_{n,1}<B_{n,2}<\cdots< B_{n,k_n}$ in $\mco(\alpha)$ of length $k_n\geq 2$, where each set $B_{n,j}$ is contained in $[r(A_n), \ell(A_{n+1})]$ and such that if $I$ is a connected component of $[r(A_n), \ell(A_{n+1})]\backslash \bigcup_{j=1}^{k_n}B_{n,j}$, then $\diam(\alpha(\ov{I}))<\frac{\delta}{6}$. Similarly, since $\mco(\beta)$ is a Cantor complement, we may find a sequence $D_{n,1}<D_{n,2}<\cdots< D_{n,m_n}$ in $\mco(\beta)$ of length $m_n\geq 2$ where each $D_{n,j}$ is contained in $[r(C_{n-1}), \ell(C_{n})]$ and such that if $I$ is a connected component of $[r(C_{n-1}), \ell(C_{n})]\backslash \bigcup_{j=1}^{m_n}D_{n,j}$, then $\diam(\beta(\ov{I}))<\frac{\delta}{6}$. Note that $D_{n,j}\subseteq A_n$ and $B_{n,j}\subseteq C_n$ holds whenever these sets are defined (See Figure \ref{fig2}).
\begin{figure}[H]
\centering \includegraphics[height=.9in]{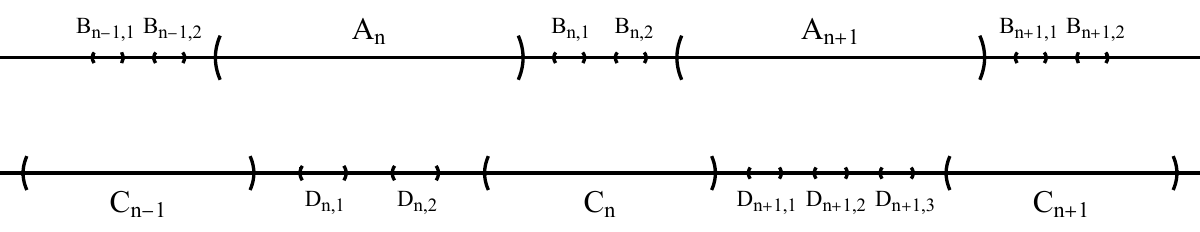}
\caption{\label{fig2} The upper segment illustrates the selected sets $A_n$ and $B_{n,j}$ on which $\alpha$ is constant. The lower segment illustrates the selected sets $C_n$ and $D_{n,j}$ on which $\beta$ is constant.}
\end{figure}
Set $x_{n,j}=\alpha(\ov{B_{n,j}})$ and $z_{n,j}=\beta(\ov{D_{n,j}})$ whenever these sets are defined. Additionally, let $p_n$ be the midpoint of $[\ell(A_n),r(C_{n-1})]$, $q_n$ be the midpoint of $[\ell(C_n),r(A_n)]$, $\eta_{n,j}$ be the midpoint of $B_{n,j}$, and $\theta_{n,j}$ be the midpoint of $D_{n,j}$.

To begin our definition of $\alpha'$, we first set $\alpha'$ to agree with $\alpha$ on $\{0,1\}$ and on $[r(A_n),\ell(A_{n+1})]\backslash \bigcup_{j=1}^{k_n}B_{n,j}$ for all $n\in\bbz$.
We complete the definition of $\alpha' $ piecewise by fixing $n\in\bbn$ and defining $\alpha'$ on $\ov{A_n}$ in three cases and $\ov{B_{n,j}}$ for $1\leq j\leq k_n$ in a fourth case. It may be helpful to note that
\[\ov{A_n}=[\ell(A_{n}),p_n]\cup [p_n,r(C_{n-1})]\cup [r(C_{n-1}),\ell(C_n)]\cup [\ell(C_n),q_n]\cup [q_n,r(A_n)]\]
(see Figure \ref{fig3}).

\begin{itemize}
    \item[$\bf (A1)$]\label{alpha'1} On $[\ell(A_{n}),\theta_{n,1}]$, we define $\alpha'$ to be the CIP-loop, which is the linear reparameterization of $L_{w_n,y_{n-1}}$ on $[\ell(A_{n}),p_n]$, the constant path at $\frac{w_n+y_{n-1}}{2}$ on $[p_n,\ell(D_{n,1})]$, and the linear reparameterization of $\ov{L_{w_n,y_{n-1}}}$ on $[\ell(D_{n,1}),\theta_{n,1}]$.
\vspace{.75em}
    \item[$\bf (A2)$]\label{alpha'2} On $[\theta_{n,j},\theta_{n,j+1}]$ (for each $1\leq j\leq m_{n}-1$), we define $\alpha '$ to be the CIP-loop, which is the linear reparameterization of $L_{w_n,z_{n,j}}$ on $[\theta_{n,j},r(D_{n,j})]$, the constant path at $\frac{w_n+z_{n,j}}{2}$ on $[r(D_{n,j}),\ell(D_{n,j+1})]$, and the linear reparameterization of $\ov{L_{w_n,z_{n,j}}}$ on $[\ell(D_{n,j+1}),\theta_{n,j+1}]$.
\vspace{.75em}
    \item[$\bf (A3)$]\label{alpha'3} On $[\theta_{n,m_n},r(A_n)]$, we define $\alpha '$ to be the CIP-loop, which is the linear reparameterization of $L_{w_n, z_{n,m_n}}$ on $[\theta_{n,m_n},r(D_{n,m_n})]$, the constant path at $\frac{w_n+ z_{n,m_n}}{2}$ on $[r(D_{n,m_n}),q_n]$, and the linear reparameterization of $\ov{L_{w_n, z_{n,m_n}}}$ on $[q_n,r(A_n)]$. This completes the definition of $\alpha '$ on $\ov{A_n}$.
\vspace{.75em}
    \item[$\bf (A4)$]\label{alpha'4} Lastly, on $\ov{B_{n,j}}$, we define $\alpha '$ to be the linear reparameterization of the CIP-loop $L_{x_{n,j},y_{n}}\ov{L_{x_{n,j},y_{n}}}$.
\end{itemize}

This completes the definition of $\alpha '$ (compare Figures \ref{fig3} and \ref{figconst}). Note that $\alpha'$ agrees with $\alpha$ everywhere except on a $\bbz$-ordered sequence of elements of $\lc(\alpha)$ on which CIP-loops replace constant loops. Hence, it is clear that $\alpha'$ is a well-defined function.
To begin our definition of $\beta'$, we first set $\beta'$ to agree with $\beta$ on $\{0,1\}$ and on $[r(C_{n-1}),\ell(C_n)]\backslash \bigcup_{j=1}^{m_n}D_{n,j}$ for each $n\in\bbz$. We complete the definition of $\beta' $ piecewise by fixing $n\in\bbn$ and defining $\beta'$ on $\ov{C_n}$ in five cases and $\ov{D_{n,j}}$ for $1\leq j\leq m_n$ in a sixth case.

\begin{itemize}
    \item[$\bf (B1)$]\label{beta'1} On $[\ell(C_n), q_n]$, we define $\beta'$ to be constant at $y_n$ (this happens to agree with the value of $\beta$).
\vspace{.75em}
    \item[$\bf (B2)$]\label{beta'2} On $[ q_n,\eta_{n,1}]$, we define $\beta'$ to be CIP-loops, which is the linear reparameterization of $L_{y_n,w_n}$ on $[ q_n,r(A_n)]$, the constant path at $\frac{y_n+w_n}{2}$ on $[r(A_n),\ell(B_{n,1})]$, and $\ov{L_{y_n,w_n}}$ on $[\ell(B_{n,1}),\eta_{n,1}]$.
\vspace{.75em}
    \item[$\bf (B3)$]\label{beta'3} On $[\eta_{n,j},\eta_{n,j+1}]$ (for each $ 1\leq j\leq k_n-1$), we define $\beta'$ to be CIP-loops, which is the linear reparameterization of $L_{y_n,x_{n,j}}$ on $[\eta_{n,j},r(B_{n,j})]$, the constant path at $\frac{y_n+x_{n,j}}{2}$ on $[r(B_{n,j}),\ell(B_{n,j+1})]$, and the linear reparameterization of $\ov{L_{y_n,x_{n,j}}}$ on $[\ell(B_{n,j+1}),\eta_{n,j+1}]$.
\vspace{.75em}
    \item[$\bf (B4)$]\label{beta'4} On $[\eta_{n,k_n},p_{n+1}]$, we define $\beta'$ to be the CIP-loop, which is the linear reparameterization of $L_{y_n,x_{n,k_n}}$ on $[\eta_{n,k_n},r(B_{n,k_n})]$, the constant path at $\frac{y_n+x_{n,k_n}}{2}$ on $[r(B_{n,k_n}),\ell(A_{n+1})]$, and the linear reparameterization of $\ov{L_{y_n,x_{n,k_n}}}$ on $[\ell(A_{n+1}),p_{n+1}]$.
\vspace{.75em}
    \item[$\bf (B5)$]\label{beta'5} On $[p_{n+1},r(C_n)]$, we define $\beta'$ to be constant at $y_n$ (this happens to agree with the value of $\beta$). This completes the definition of $\beta '$ on $\ov{C_n}$.
\vspace{.75em}
    \item[$\bf (B6)$]\label{beta'6} On $\ov{D_{n,j}}$ (for each $1\leq j\leq m_n$), we define $\beta'$ to be the linear reparameterization of the CIP-loop $L_{z_{n,j},w_{n}} \ov{L_{z_{n,j},w_{n}}}$.
\end{itemize}

This completes the definition of $\beta '$, which is a well-defined function (see Figures \ref{fig3} and \ref{figconst}).
\begin{figure}[H]
\centering \includegraphics[height=1.4in]{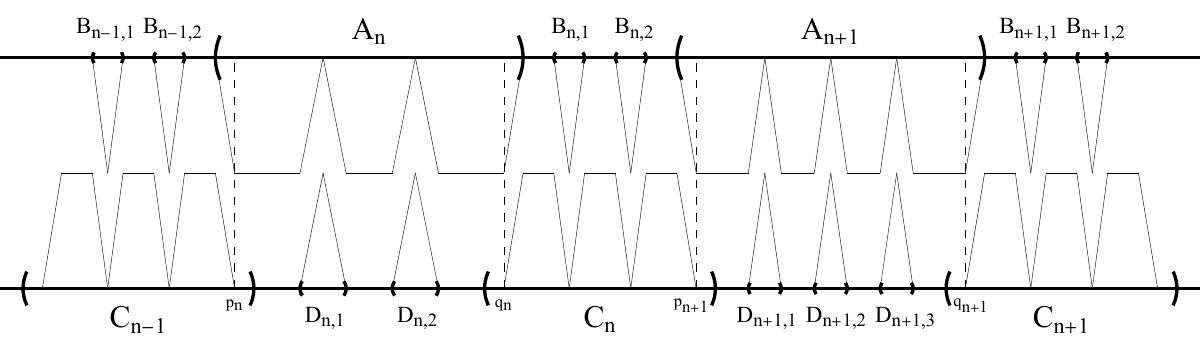}
\caption{\label{fig3} The interlocking pattern that determines the structure of $\alpha '$ and $\beta '$. The triangles represent inserted inverse-pair loops and the trapezoids represent an inverse pair loop but with a constant path included in the middle.  The three subintervals of $\ov{A_n}$ and the five subintervals of $\ov{C_n}$ partitioned by the trapezoids correspond to the piecewise definitions ${\bf(A1)}$-$\bf {(A3)}$ and ${\bf(B1)}$-$\bf {(B5)}$ respectively.}
\end{figure}
\begin{figure}[H]
\centering \includegraphics[height=1.6in]{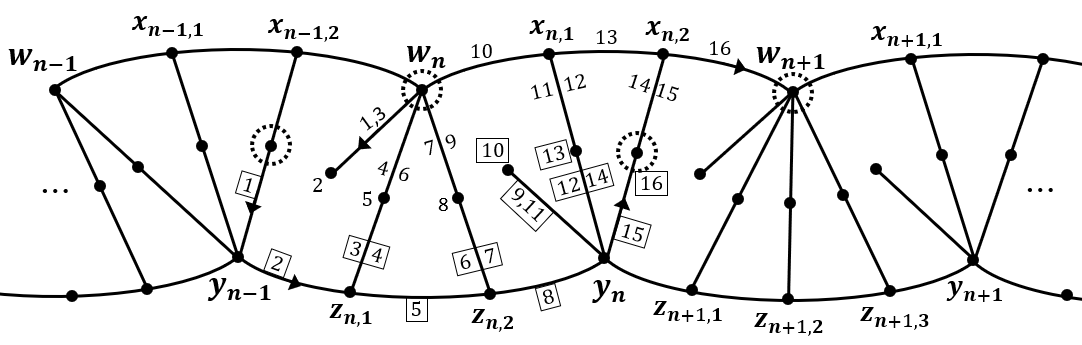}
\caption{\label{figconst} A full ``period" of the construction of $\alpha '$ and $\beta '$ starting with $\ell(A_n)$ and ending with $\ell(A_{n+1})$. Starting and ending points are circled and the initial and terminal steps are indicated with arrows. The upper and lower curves represent $\alpha$ and $\beta$ respectively where each subdivided segment has diameter less than $\frac{\delta}{6}$. The numbered paths trace out the trajectory of $\alpha'$ and the box-numbered paths trace out the corresponding trajectory of $\beta '$. When a number is positioned at a point, the path is constant at that point.}
\end{figure}

By construction, $\alpha '|_{(0,1)}$ is continuous. If $U$ is a convex neighborhood of $\alpha(0)=\beta(0)$ in $\bbd^2$, then we may find $N\in\bbz$ such that $\alpha(A_n\cup C_n)\cup \beta(A_n\cup C_n)\subseteq U$ for all $n\leq N$. Let $t=\sup(A_N)$. Since the CIP-loops added to $\alpha$ on $[0,t]$ are contained in line segments with endpoints in $\alpha([0,t])$ and $\beta([0,t])$ respectively, it follows that $\alpha '([0,t])\subseteq U$. Thus $\alpha '$ is continuous at $0$. A symmetric argument shows that $\alpha '$ is continuous at $1$. The construction of $\alpha '$ also ensures that distinct elements of $\lc(\alpha ')$ have disjoint closures. Hence, $\alpha '$ is a Cantor path. Additionally, since $\alpha '$ is constructed from  $\alpha$ only by replacing constant loops with CIP-loops on elements of $\lc(\alpha)$ (one on each $B_{n,j}$ and at least three on each $A_n$), we have $\alpha\ord \alpha '$. Moreover, if $\mu:\ov{I}\to \bbd^2$ is one of the added CIP-loops in the construction of $\alpha '$, then the image of $\mu$ is the line segment connecting $\alpha(s)$ and $\frac{\alpha(s)+\beta(s)}{2}$ for some $s\in \ov{I}$. Thus $\rho(\alpha,\alpha ')\leq \frac{1}{2}\rho(\alpha,\beta)$. Since these arguments apply just as well for $\beta '$, we also conclude that $\beta '$ is a Cantor path satisfying $\beta\ord\beta '$ and $\rho(\beta,\beta ')\leq \frac{1}{2}\rho(\alpha,\beta)$.

To complete the proof, we will show that $d(\alpha '(s),\beta '(s))\leq \frac{2\delta}{3}$. We begin by considering the case $s\in \ov{A_n}$.

\begin{enumerate}
    \item On $[\ell(A_n), p_n]$, $\alpha '$ parameterizes the line from $w_n$ to $\frac{w_n+y_{n-1}}{2}$  (See first part of $\bf (A1)$) and $\beta '$ is a corresponding parameterization of the line from $\frac{x_{n-1, k_{n-1}}+y_{n-1}}{2}$ to $y_{n-1}$  (See last part of $\bf (B4)$). Thus if $s\in [\ell(A_n), p_n]$, Remark \ref{linearpathremark} gives us the first inequality in the following sequence:\\
    \begin{eqnarray*}
     & & d(\alpha '(s),\beta '(s))  \\
    &\leq& \max\left\{\frac{d(x_{n-1, k_{n-1}},y_{n-1})}{2}+d(x_{n-1, k_{n-1}},w_n),\frac{d(w_n,y_{n-1})}{2}\right\}\\
    & =&  \max\left\{\frac{d(\alpha(r(B_{n-1, k_{n-1}})),\beta(r(B_{n-1, k_{n-1}})))}{2}+d(x_{n-1, k_{n-1}},w_n),\frac{d(\alpha(p_n),\beta(p_n))}{2}\right\}\\
    &<& \max\left\{\frac{\delta}{2}+\frac{\delta}{6},\frac{\delta}{2}\right\} \,\leq \, \frac{2\delta}{3}.
    \end{eqnarray*}

    \item On $[p_n,r(C_n)]$, $\alpha '$ is constant at $\frac{w_n+y_{n-1}}{2}$  (See middle part of $\bf (A1)$) and $\beta '$ is constant at $y_{n-1}$  (See $\bf (B5)$). Therefore, if $s\in [p_n,r(C_n)]$, we have \[d(\alpha '(s),\beta '(s))=\frac{d(w_n,y_{n-1})}{2}=\frac{d(\alpha(p_n),\beta(p_n))}{2}\leq\frac{\delta}{2}.\]

    \item On $[r(C_n),\ell(D_{n,1})]$, $\alpha '$ is constant at $\frac{w_n+y_{n-1}}{2}$  (See middle part of $\bf (A1)$) and $\beta '$ agrees with $\beta$. Thus if $s\in [r(C_n),\ell(D_{n,1})]$, then
    \begin{eqnarray*}
    d(\alpha '(s),\beta '(s)) &\leq & d(\alpha '(s),\beta(r(C_{n-1})))+d(\beta(r(C_{n-1})),\beta ' (s))\\
    &= & d\left(\frac{w_n+y_{n-1}}{2},y_{n-1}\right)+d(\beta(r(C_{n-1})),\beta  (s))\\
    &= & \frac{d(\alpha(p_n),\beta(p_n))}{2}+d(\beta(r(C_{n-1})),\beta  (s))\\
    &< & \frac{\delta}{2}+\frac{\delta}{6} \, \leq  \,\frac{2\delta}{3}
    \end{eqnarray*}

    \item On $[\ell(D_{n,1}),\theta_{n,1}]$, $\alpha '$ parameterizes the line segment from $\frac{w_n+y_{n-1}}{2}$ to $w_n$  (See last part of $\bf (A1)$) and $\beta '$ parameterizes the line segment from $z_{n,1}$ to $\frac{z_{n,1}+w_n}{2}$. In this case, if $s\in [\ell(D_{n,1}),\theta_{n,1}]$, then
    \begin{eqnarray*}
    d(\alpha '(s),\beta '(s)) &\leq&  \max\left\{ d\left(w_n,\frac{w_n+y_{n-1}}{2}\right),d\left(z_{n,1},\frac{z_{n,1}+w_n}{2}\right)\right\}\\
    &=& \max\left\{\frac{ d(w_n,y_{n-1})}{2},\frac{d(z_{n,1},w_n)}{2}\right\}\\
    &=& \max\left\{\frac{ d(\alpha(p_n),\beta(p_n))}{2},\frac{d(\alpha(\theta_{n,1}),\beta(\theta_{n,1}))}{2}\right\}\\
    & \leq & \frac{\delta}{2} \,<\, \frac{2\delta}{3}
    \end{eqnarray*}

    \item On $[\theta_{n,1},r(D_{n,1})]$, $\alpha '$ parameterizes the line segment from $w_n$ to $\frac{z_{n,1}+w_n}{2}$  (See first part of $\bf (A2)$) and $\beta$ parameterizes the line segment from $\frac{z_{n,1}+w_n}{2}$ to $z_{n,1}$  (See $\bf (B6)$). Since these paths move along the same line segment, it follows that if $s\in [\theta_{n,1},r(D_{n,1})]$, then \[d(\alpha '(s),\beta '(s))=\frac{d(w_n,z_{n,1})}{2}\leq \frac{d(\alpha(\theta_{n,1}),\beta(\theta_{n,1}))}{2}\leq \frac{\delta}{2}\]
\end{enumerate}

As we proceed through the remainder of the intervals on which $\alpha '$ and $\beta '$ are defined piecewise, every remaining case (all of which are illustrated in Figures \ref{fig3} and \ref{figconst}) may be verified using an argument nearly identical to one of the above five cases. Hence, we omit the remainder of the details. We conclude that $\rho(\alpha',\beta')\leq \frac{2\delta}{3}$.
\end{proof}

The construction given in the proof of Lemma \ref{fabelsconstruction} results in two Cantor paths $\alpha '$ and $\beta '$, which are not staggered. Hence, to iterate this construction, we must perturb one of these two paths so that they become staggered.

\begin{lemma}\label{recursionlemma}
For given staggered Cantor paths $\alpha,\beta:\ui\to\bbd^2$ with $\alpha(i)=\beta(i)$ for $i\in\{0,1\}$ and $\delta=\rho(\alpha,\beta)$, there exists sequences of Cantor paths $\{\alpha_n\}_{n\geq 0}$, $\{\alpha_{n}'\}_{n\geq 0}$, $\{\beta_n\}_{n\geq 0}$, such that $\alpha_0=\alpha$, $\beta_0=\beta$ and such that for all $n\geq 0$,
\begin{enumerate}
\item $\alpha_{n}\ord \alpha_{n}'$ and $\beta_{n}\ord \beta_{n+1}$,
\item $\alpha_n$ and $\beta_n$ are staggered,
\item $\alpha_{n+1}=\alpha_{n}'\circ f_n$ for some increasing homeomorphism $f_n:\ui\to\ui$ with $\rho(f_n,id_{\ui})<\frac{1}{2^n}$,
\item $\max\{\rho(\alpha_{n}',\alpha_{n+1}),\rho(\alpha_n,\alpha_{n+1}),\rho(\beta_n,\beta_{n+1}),\rho(\alpha_n,\beta_{n})\}\leq \delta\left(\frac{3}{4}\right)^n$.
\end{enumerate}
\end{lemma}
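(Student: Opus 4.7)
The plan is to construct the sequences recursively by induction on $n$, maintaining the invariant that $\alpha_n$ and $\beta_n$ are staggered Cantor paths with $\alpha_n(i) = \beta_n(i)$ for $i \in \{0,1\}$ and $\rho(\alpha_n, \beta_n) \leq \delta(3/4)^n$. The base case $n=0$ is immediate from the hypotheses on $\alpha$ and $\beta$.

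For the inductive step, the approach combines two tools already developed in the paper. First, apply Lemma \ref{fabelsconstruction} (Fabel's Construction) to the staggered pair $(\alpha_n, \beta_n)$ to obtain Cantor paths $\alpha_n'$ and a candidate for $\beta_{n+1}$ satisfying $\alpha_n \ord \alpha_n'$, $\beta_n \ord \beta_{n+1}$, the bounds $\rho(\alpha_n, \alpha_n'), \rho(\beta_n, \beta_{n+1}) \leq \tfrac{1}{2}\rho(\alpha_n, \beta_n)$, and the crucial contraction $\rho(\alpha_n', \beta_{n+1}) \leq \tfrac{2}{3}\rho(\alpha_n, \beta_n)$. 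This commits $\beta_{n+1}$ but leaves $\alpha_n'$ and $\beta_{n+1}$ potentially non-staggered, so one cannot simply set $\alpha_{n+1} := \alpha_n'$ and iterate.

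Second, apply Lemma \ref{staggeredlemma} to the Cantor paths $\alpha_n'$ and $\beta_{n+1}$ with small positive parameters $\epsilon_n$ and $\delta_n$ to be chosen. This produces an increasing homeomorphism $f_n : \ui \to \ui$ with $\rho(f_n, id_{\ui}) < \delta_n$ and $\rho(\alpha_n', \alpha_n' \circ f_n) < \epsilon_n$ such that $\alpha_n' \circ f_n$ and $\beta_{n+1}$ are staggered. Set $\alpha_{n+1} := \alpha_n' \circ f_n$; since $f_n$ is an increasing homeomorphism, $\alpha_{n+1}$ is a Cantor path with the same endpoints as $\alpha_n$, which secures conditions (1) and (2) at level $n+1$. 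Choosing $\delta_n = 1/2^n$ delivers condition (3).

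The key quantitative step is to choose $\epsilon_n$ so that condition (4) propagates the invariant, and here the slack $\tfrac{3}{4} - \tfrac{2}{3} = \tfrac{1}{12}$ does the work. The triangle inequality yields $\rho(\alpha_{n+1}, \beta_{n+1}) \leq \epsilon_n + \tfrac{2}{3}\rho(\alpha_n, \beta_n)$, so to propagate $\rho(\alpha_{n+1}, \beta_{n+1}) \leq \delta(3/4)^{n+1}$ it suffices to require $\epsilon_n \leq \tfrac{1}{12}\delta(3/4)^n$; the same choice $\epsilon_n = \tfrac{1}{12}\delta(3/4)^n$ easily dominates the other three quantities in condition (4), all of which are bounded by $\tfrac{1}{2}\rho(\alpha_n, \beta_n) + \epsilon_n \leq \delta(3/4)^n$. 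The main obstacle is conceptual rather than technical: one must line up the contraction ratio $\tfrac{2}{3}$ produced by Fabel's Construction with a geometric rate $\tfrac{3}{4}$ large enough to leave a positive budget for the stagger-restoring perturbation at every level, while simultaneously satisfying the summability condition $\rho(f_n, id_{\ui}) < 1/2^n$ needed downstream by Lemma \ref{infinitecompositionlemma}.
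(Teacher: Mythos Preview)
Your proposal is correct and follows essentially the same approach as the paper: recursively apply Fabel's Construction (Lemma \ref{fabelsconstruction}) for the contraction by $2/3$, then Lemma \ref{staggeredlemma} to re-stagger via a small reparameterization, exploiting the slack $\tfrac{3}{4}-\tfrac{2}{3}=\tfrac{1}{12}$ to absorb the perturbation. The only cosmetic difference is that the paper takes the perturbation bound as $\tfrac{1}{12}\rho(\alpha_n,\beta_n)$ rather than your $\tfrac{1}{12}\delta(3/4)^n$, but since $\rho(\alpha_n,\beta_n)\leq\delta(3/4)^n$ by the inductive invariant, both choices work.
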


\begin{proof}
Let $\alpha_0=\alpha$, $\beta_0=\beta$, and $\delta=\rho(\alpha_0,\beta_0)$. Suppose $\alpha_n$ and $\beta_n$ are given staggered Cantor paths that satisfy $\rho(\alpha_n,\beta_n)\leq \delta\left(\frac{3}{4}\right)^{n}$. Applying Lemma \ref{fabelsconstruction}, find Cantor paths $\alpha_{n}',\beta_{n}':\ui\to \bbd^2$ such that $\alpha_n\ord \alpha_{n}'$, $\beta_{n}\ord \beta_{n}'$, $\max\{\rho(\alpha_n,\alpha_{n}'),\rho(\beta_n,\beta_{n}')\}\leq \frac{1}{2}\rho(\alpha_n,\beta_n)$, and $\rho(\alpha_{n}',\beta_{n}')\leq \frac{2\delta}{3}$. By Lemma \ref{staggeredlemma}, there exists an increasing homeomorphism $f_n:\ui\to \ui$ such that $\rho(f_n,id_{\ui})<\frac{1}{2^n}$, $\rho(\alpha_{n}'\circ f_n,\alpha_{n}')<\frac{1}{12}\rho(\alpha_n,\beta_n)$, and such that $\alpha_{n}'\circ f_n$ and $\beta_{n}'$ are staggered. Set $\alpha_{n+1}=\alpha_{n}'\circ f_n$ and $\beta_{n+1}=\beta_{n}'$. Then $\alpha_{n+1}$ and $\beta_{n+1}$ are staggered Cantor paths and satisfy the following inequalities:
\[\rho(\alpha_{n}',\alpha_{n+1})<\frac{1}{12}\rho(\alpha_n,\beta_n)\leq \frac{\delta}{12}\left(\frac{3}{4}\right)^{n}\leq \delta\left(\frac{3}{4}\right)^{n+1}\]
and
\begin{eqnarray*}
\rho(\alpha_n,\alpha_{n+1}) &\leq& \rho(\alpha_n,\alpha_{n}')+\rho(\alpha_{n}',\alpha_{n+1})\\
&<&
\frac{1}{2}\rho(\alpha_n,\beta_n)+\frac{1}{12}\rho(\alpha_n,\beta_n)\\
&<& \frac{3}{4}\rho(\alpha_n,\beta_n) \,\leq \, \delta\left(\frac{3}{4}\right)^{n+1}
\end{eqnarray*}
and
\[\rho(\beta_n,\beta_{n+1}) \leq \frac{1}{2}\rho(\alpha_n,\beta_n)\leq \frac{3}{4}\rho(\alpha_n,\beta_n)\leq  \delta\left(\frac{3}{4}\right)^{n+1} \]
and
\begin{eqnarray*}
\rho(\alpha_{n+1},\beta_{n+1}) &\leq& \rho(\alpha_{n+1},\alpha_{n}')+\rho(\alpha_{n}',\beta_{n}')\\
&\leq& \frac{1}{12}\rho(\alpha_n,\beta_n)+\frac{2}{3}\rho(\alpha_n,\beta_n)\\
&= & \frac{3}{4}\rho(\alpha_n,\beta_n) \,\leq \, \delta\left(\frac{3}{4}\right)^{n+1}.
\end{eqnarray*}
This completes the inductive construction of the desired sequences.
\end{proof}

In the next two statements, we assume $\alpha,\beta$ are fixed staggered Cantor paths as given in the hypothesis of Lemma \ref{recursionlemma}.

\begin{proposition}\label{gammaprop}
The sequences $\{\alpha_n\}_{n\geq 0}$ and $\{\beta_n\}_{n\geq 0}$ constructed in the proof of Lemma \ref{recursionlemma} both converge uniformly to a single path $\gamma:\ui\to \bbd^2$.
\end{proposition}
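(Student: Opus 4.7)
The plan is to use the geometric bounds in Lemma \ref{recursionlemma}(4) to show that both $\{\alpha_n\}$ and $\{\beta_n\}$ are Cauchy sequences in the sup-metric $\rho$, invoke completeness of the path space to extract limits, and then argue that the two limits coincide because $\rho(\alpha_n,\beta_n)\to 0$.

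First, I would observe that for any $m>n\geq 0$, the triangle inequality combined with the bound $\rho(\alpha_k,\alpha_{k+1})\leq \delta(3/4)^k$ gives
\[
\rho(\alpha_n,\alpha_m)\leq \sum_{k=n}^{m-1}\rho(\alpha_k,\alpha_{k+1})\leq \delta\sum_{k=n}^{\infty}\left(\tfrac{3}{4}\right)^k = 4\delta\left(\tfrac{3}{4}\right)^n,
\]
so $\{\alpha_n\}$ is Cauchy in $\rho$. The identical estimate applied to $\rho(\beta_n,\beta_{n+1})\leq \delta(3/4)^n$ shows $\{\beta_n\}$ is Cauchy as well.

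Next, since $\bbd^2$ is a complete metric space, the space of paths $\ui\to\bbd^2$ equipped with the sup-metric is complete (as noted at the start of Section 2, citing \cite[4.3.13]{Engelking}). Hence $\{\alpha_n\}$ converges uniformly to some path $\gamma_\alpha:\ui\to\bbd^2$, and $\{\beta_n\}$ converges uniformly to some path $\gamma_\beta:\ui\to\bbd^2$.

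Finally, using the bound $\rho(\alpha_n,\beta_n)\leq \delta(3/4)^n\to 0$ together with the already-established uniform convergence, the triangle inequality yields
\[
\rho(\gamma_\alpha,\gamma_\beta)\leq \rho(\gamma_\alpha,\alpha_n)+\rho(\alpha_n,\beta_n)+\rho(\beta_n,\gamma_\beta)\xrightarrow[n\to\infty]{}0,
\]
so $\gamma_\alpha=\gamma_\beta$. Setting $\gamma$ to be this common limit completes the proof. There is no real obstacle here; the entire argument is a straightforward application of completeness to the geometric estimates supplied by Lemma \ref{recursionlemma}(4), which is precisely why those explicit bounds were set up with ratio $3/4<1$ in the preceding lemma.
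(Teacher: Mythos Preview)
Your proof is correct and follows essentially the same approach as the paper's own argument: both use the bounds from Lemma~\ref{recursionlemma}(4) to show the sequences are Cauchy, invoke completeness of the path space to obtain limits, and then use $\rho(\alpha_n,\beta_n)\to 0$ to conclude the limits coincide. You are slightly more explicit in writing out the geometric-series estimate and the triangle inequality, but there is no substantive difference.
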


\begin{proof}
Recall that $\delta=\rho(\alpha_0,\beta_0)$ is fixed. Since $\rho(\alpha_n,\alpha_{n+1})\leq \delta\left(\frac{3}{4}\right)^n$ for all $n\geq 0$, $\{\alpha_n\}_{n\geq 0}$ is Cauchy in the $\sup$-metric and, therefore, converges uniformly to some path $\gamma:\ui\to \bbd^2$. Since the sequence $\{\beta_n\}_{n\geq 0}$ satisfies the same inequality, $\{\beta_n\}_{n\geq 0}$ also converges uniformly to some path. Additionally, since $\rho(\alpha_{n},\beta_{n})\leq \delta(\frac{3}{4})^{n}$ for all $n\geq 0$, $\{\alpha_n\}_{n\geq 0}$ and $\{\beta_n\}_{n\geq 0}$ must both converge uniformly to $\gamma$.
\end{proof}

\begin{lemma}\label{denlemma}
If $\gamma:\ui\to \bbd^2$ is the uniform limit of the sequences $\{\alpha_n\}_{n\geq 0}$ and $\{\beta_n\}_{n\geq 0}$ as given in the conclusion of Proposition \ref{gammaprop}, then $\alpha\den\gamma$ and $\beta\den\gamma$.
\end{lemma}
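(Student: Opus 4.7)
The plan is to apply Lemma \ref{alternatinglemma} to handle $\alpha \den \gamma$ and to apply the simpler Lemma \ref{gammaextensionlemma} directly for $\beta \den \gamma$, since the sequence $\{\beta_n\}$ was constructed without any intervening perturbation. Both routes use the sequences produced by Lemma \ref{recursionlemma} together with the uniform convergence established in Proposition \ref{gammaprop}, so almost all of the work has already been done; the proof amounts to a careful verification that each hypothesis is in place.

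For the statement $\alpha \den \gamma$, I would take the three sequences $\{\alpha_n\}_{n\geq 0}$, $\{\alpha_n'\}_{n\geq 0}$, and $\{f_n\}_{n\geq 0}$ from Lemma \ref{recursionlemma} and match them against the four hypotheses of Lemma \ref{alternatinglemma}. Hypothesis (1), uniform convergence $\{\alpha_n\} \to \gamma$, is Proposition \ref{gammaprop}; hypothesis (2), $\alpha_n \ord \alpha_n'$, is Lemma \ref{recursionlemma}(1); hypothesis (3), $\alpha_{n+1} = \alpha_n' \circ f_n$, is Lemma \ref{recursionlemma}(3); and hypothesis (4) is satisfied with $a = 1$, $r = \tfrac{1}{2}$ because Lemma \ref{recursionlemma}(3) also ensures $\rho(f_n, id_{\ui}) < 2^{-n}$. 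Lemma \ref{alternatinglemma} then yields $\alpha_n \den \gamma$ for every $n \geq 0$; specialising to $n = 0$ gives $\alpha = \alpha_0 \den \gamma$.

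For $\beta \den \gamma$, the argument is even shorter, because the recursion was set up so that $\beta_{n+1} = \beta_n'$, with no homeomorphism inserted between consecutive terms. Thus Lemma \ref{recursionlemma}(1) already gives $\beta_n \ord \beta_{n+1}$ for all $n$, and Proposition \ref{gammaprop} gives $\{\beta_n\} \to \gamma$ uniformly. These are precisely the hypotheses of Lemma \ref{gammaextensionlemma}, so $\beta_n \den \gamma$ for every $n \geq 0$, and in particular $\beta = \beta_0 \den \gamma$.

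There is essentially no obstacle to overcome at this stage: the delicate work is packaged inside Lemma \ref{alternatinglemma} (which in turn relied on Lemma \ref{gammaextensionlemma} and the bookkeeping relegated to the Appendix), and Fabel's construction produced data precisely tailored to those hypotheses. The only point worth mentioning explicitly is that the $\beta$-side does not need the full strength of Lemma \ref{alternatinglemma}; using Lemma \ref{gammaextensionlemma} directly is cleaner and emphasises that all of the delicate reparameterisation activity takes place on the $\alpha$-side. This completes the proof of Lemma \ref{denlemma}, and combined with Proposition \ref{gammaprop} gives the conclusion of the Main Theorem for staggered Cantor paths $\alpha, \beta$ into $\bbd^2$.
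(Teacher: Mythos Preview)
Your proposal is correct and follows essentially the same approach as the paper's own proof: apply Lemma \ref{alternatinglemma} to the data $\{\alpha_n\}$, $\{\alpha_n'\}$, $\{f_n\}$ from Lemma \ref{recursionlemma} for the $\alpha$-side, and handle the $\beta$-side by the simpler route. The paper in fact phrases the $\beta$-argument as another instance of Lemma \ref{alternatinglemma} (with trivial reparameterisations $f_n=id$) but explicitly notes in parentheses that one could instead invoke Lemma \ref{gammaextensionlemma} directly, which is exactly what you do.
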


\begin{proof}
For given paths $\alpha$ and $\beta$, Lemma \ref{recursionlemma} gives sequences $\{\alpha_n\}_{n\geq 0}$, $\{\alpha_n'\}_{n\geq 0}$, $\{f_n\}_{n\in\bbn}$, and $\{\beta_n\}_{n\geq 0}$ satisfying a variety of relations and inequalities. Proposition \ref{gammaprop} ensures $\{\alpha_n\}_{n\geq 0}$ and $\{\beta_n\}_{n\geq 0}$ converge uniformly to a path $\gamma$. The sequences $\{\alpha_n\}_{n\geq 0}$, $\{\alpha_n'\}_{n\geq 0}$, $\{f_n\}_{n\in\bbn}$ and the limit path $\gamma$ satisfy the hypotheses of Lemma \ref{alternatinglemma}. It follows that $\alpha_n\den\gamma$ for all $n\geq 0$. In particular, $\alpha=\alpha_0\den\gamma$. Similarly, we may apply Lemma \ref{alternatinglemma} to the sequence $\{\beta_n\}_{n\geq 0}$ in the case where $\beta_{n}=\beta_n'$ and $f_n=id$ for all $n\geq 0$ (or we could apply Lemma \ref{gammaextensionlemma}). Thus, $\beta=\beta_0\den \gamma$.
\end{proof}

\subsection{Proof of Theorem \ref{theoremmain}}

\begin{proof}[Proof of Theorem \ref{theoremmain}]
First, we prove  the special case $X=\bbd^2$. Respectively, let $a$ and $b$ be the injective paths in $\bbd^2$ from $(1,0)$ to $(-1,0)$ that parameterize the upper and lower semicircles of $S^1$ respectively. Let $\tau:\ui\to\ui$ be the ternary Cantor map and note that $a\circ \tau$ and $b\circ \tau$ are Cantor paths. Set $b_0=b\circ \tau$. By Lemma \ref{staggeredlemma} (taking $\epsilon=1$), there exists an increasing homeomorphism $f:\ui\to \ui$ such that $a_0=a\circ \tau\circ f$ and $b_0$ are staggered. Applying Lemma \ref{denlemma} to the pair of staggered Cantor paths $a_0$ and $b_0$, we obtain the existence of a path $c:\ui\to \bbd^2$ such that $a_0\den c$ and $b_0\den c$. Since $a\tripproxs a_0$ and $b \tripproxs b_0$, Lemma \ref{denproperties} (4) gives that $a\den c$ and $b\den c$. Since $a$ and $b$ are injective paths, it follows that $c\geq_{\bbr}a$ and $c\geq_{\bbr}b$

In the general case, suppose $\alpha ,\beta :\ui\to X$ are path homotopic. Find a map $f:\bbd^2\to X$ such that $f\circ a=\alpha$ and $f\circ b=\beta$. By Lemma \ref{denproperties} (1), the path $\gamma =f\circ c$ in $X$ satisfies $\gamma\geq_{\bbr}\alpha$ and $\gamma\geq_{\bbr} \beta$.
\end{proof}

\begin{corollary}\label{corcoincideswithhomotopy}
The equivalence relation on the set of paths in a given topological space generated by $\geq_{\bbr}$ (and $\den$) coincides with path-homotopy.
\end{corollary}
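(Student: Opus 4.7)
The plan is to observe that this corollary is an essentially immediate consequence of the Main Theorem, packaged together with the chain of implications already recorded in Section 2. So the proof will be short and has two directions to check.

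First, I would dispatch the easy containment. The paper has already noted that
$\alpha\geq_{\bbr}\beta \Rightarrow \alpha\den\beta \Rightarrow \alpha\simeq\beta$,
so whenever two paths are related by a single step of either $\geq_{\bbr}$ or $\den$, they are path-homotopic. Since path-homotopy is itself an equivalence relation, any equivalence relation generated by a relation contained in $\simeq$ is still contained in $\simeq$. Hence both the equivalence relation generated by $\geq_{\bbr}$ and the equivalence relation generated by $\den$ are contained in $\simeq$.

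Second, I would establish the reverse containment using the Main Theorem. Suppose $\alpha\simeq\beta$. By the Main Theorem, there exists a path $\gamma$ with $\gamma\geq_{\bbr}\alpha$ and $\gamma\geq_{\bbr}\beta$. In the equivalence relation generated by $\geq_{\bbr}$, this gives $\alpha\sim\gamma\sim\beta$, so $\alpha$ and $\beta$ lie in the same equivalence class. The same chain $\alpha$–$\gamma$–$\beta$ works for $\den$ because each step $\geq_{\bbr}$ implies the corresponding step $\den$. Thus path-homotopy is contained in each of the two generated equivalence relations, completing both equalities.

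No step here is a real obstacle; all the substance is in the Main Theorem, and the corollary is just the statement that that theorem promotes $\geq_{\bbr}$ (and hence $\den$) to a generating set for $\simeq$. The only thing worth writing out carefully is the observation that a single common ``upper bound'' $\gamma$ under $\geq_{\bbr}$ is precisely what is needed to connect $\alpha$ and $\beta$ by a two-step zig-zag in the generated equivalence relation, so no longer chains are required.
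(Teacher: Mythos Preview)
Your proposal is correct and matches the paper's intent: the corollary is stated without proof immediately after the Main Theorem, so the paper treats it as an immediate consequence, and your two-direction argument (the easy containment via $\geq_{\bbr}\Rightarrow\den\Rightarrow\simeq$, and the reverse via the common upper bound $\gamma$ from the Main Theorem) is exactly the intended unpacking.
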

 If $X$ is one-dimensional, then a loop is  null-homotopic if and only if it factors through a loop in a dendrite, see \cite[Theorem 3.7]{CConedim} for the nontrivial implication.  Thus, for one-dimensional spaces, the  relation $\den$ (see Definition \ref{defden}) is equivalent to the homotopy rel.\ endpoints relation and hence is transitive. Since $\bbd^2$ contains simple closed curves (parameterizations of which cannot factor through an $\bbr$-tree),  Theorem \ref{theoremmain} implies the following.

\begin{corollary}\label{corollarynottransitive}
If $\bbd^2$ embeds in $X$, then the $\bbr$-tree homotopy relation on the set of paths in $X$ is not transitive.
\end{corollary}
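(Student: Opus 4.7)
The plan is to derive a contradiction from the assumption that $\den$ is transitive on paths in $X$ by exhibiting, inside the embedded copy of $\bbd^2$, three paths that together would force a simple closed curve in $X$ to factor through an $\bbr$-tree. Fix an embedding $\iota:\bbd^2\hookrightarrow X$, and take the semicircle paths $a,b$ together with the common refinement $c$ in $\bbd^2$ produced in the proof of the Main Theorem, which satisfy $c\geq_{\bbr}a$ and $c\geq_{\bbr}b$. By Lemma \ref{denproperties}(1) the compositions satisfy $\iota\circ c\geq_{\bbr}\iota\circ a$ and $\iota\circ c\geq_{\bbr}\iota\circ b$, and consequently $\iota\circ a\den\iota\circ c\den\iota\circ b$. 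If $\den$ were transitive on paths in $X$, this would yield $\iota\circ a\den\iota\circ b$; equivalently, the loop $\iota\circ(a\ov{b})$ would factor through some $\bbr$-tree $T$ as $f\circ\ell$ for a loop $\ell:\ui\to T$ and a map $f:T\to X$.

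To contradict the existence of such a factorization, I would exploit the fact that $a\ov{b}$ is a standard parameterization of the unit circle $S^1\subseteq\bbd^2$ and is in particular injective on $[0,1)$; since $\iota$ is an embedding, the same injectivity holds for $\iota\circ(a\ov{b})$. Now if $\ell(s)=\ell(t)$ for some $s\ne t$ in $\ui$, then $f(\ell(s))=f(\ell(t))$ gives $\iota(a\ov{b})(s)=\iota(a\ov{b})(t)$, which forces $\{s,t\}=\{0,1\}$. Hence $\ell$ is itself injective on $[0,1)$ with $\ell(0)=\ell(1)$, so the image $\ell(\ui)$ is a simple closed curve embedded inside the $\bbr$-tree $T$.

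To close the argument I would invoke the definition of an $\bbr$-tree as a uniquely arcwise connected space: a simple closed curve provides two disjoint arcs between any two of its distinct points, contradicting unique arc-connectedness, so no $\bbr$-tree can contain one. This furnishes the desired contradiction. The only genuinely interesting step is the transfer of the injectivity of $a\ov{b}$ on $[0,1)$ over to the injectivity of $\ell$ on $[0,1)$; once that observation is made, everything else is immediate from the definition of an $\bbr$-tree and requires no further planar topology.
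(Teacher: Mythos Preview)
Your argument is correct and follows the same approach as the paper: use the Main Theorem to obtain $c\geq_{\bbr}a$ and $c\geq_{\bbr}b$ for the two semicircle arcs, push these relations into $X$ via the embedding, and observe that transitivity of $\den$ would force the simple closed curve $\iota\circ(a\ov{b})$ to factor through an $\bbr$-tree, which is impossible. The paper states the impossibility of such a factorization without proof (``parameterizations of which cannot factor through an $\bbr$-tree''), whereas you supply the explicit injectivity-transfer argument showing that $\ell$ itself must parameterize a simple closed curve in $T$; this is a welcome clarification but not a different method.
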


\section{A Solution to Dydak's Problem}\label{sec: Dydak's problem}

We conclude with a proof of Lemma \ref{lemdydaksproblem} and Theorem \ref{orbit}. First, we note the following lemma, which is proved using standard techniques from covering space theory.

\begin{lemma}\label{dendriteliftlemma}
Let $p:(E,e_0)\to (X,x_0)$ be a based map with unique lifting of paths rel.\ starting point and suppose $T$ is an $\bbr$-tree. If $f:(T,t_0)\to (X,x_0)$ is a based map, then there exists a unique based map $\wt{f}:(T,t_0)\to (E,e_0)$ such that $p\circ \wt{f}=f$.
\end{lemma}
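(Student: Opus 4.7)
The plan is to construct $\wt{f}$ pointwise via unique path lifting along the unique arcs in $T$ from $t_0$, then verify uniqueness directly from the hypothesis and continuity from the median/branching structure of $\bbr$-trees. Specifically, for each $t \in T$, let $\gamma_t:\ui\to T$ parameterize the unique arc from $t_0$ to $t$ (constant when $t = t_0$). Then $f\circ\gamma_t$ is a path in $X$ starting at $x_0 = p(e_0)$, so by the unique path lifting hypothesis there exists a unique path $h_t:\ui\to E$ with $h_t(0) = e_0$ and $p\circ h_t = f\circ\gamma_t$. I would define $\wt{f}(t) := h_t(1)$; the relations $p\circ\wt{f} = f$ and $\wt{f}(t_0) = e_0$ are immediate from the construction.

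For uniqueness, suppose $\wt{f}':(T,t_0)\to(E,e_0)$ is another based lift of $f$. Then for each $t\in T$, the path $\wt{f}'\circ\gamma_t:\ui\to E$ starts at $e_0$ and projects to $f\circ\gamma_t$, so by unique path lifting $\wt{f}'\circ\gamma_t = h_t$. Evaluating at $1$ yields $\wt{f}'(t) = \wt{f}(t)$, so $\wt{f}$ is uniquely determined.

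For continuity at $t\in T$, fix an open neighborhood $V$ of $\wt{f}(t)$ in $E$. Since $h_t$ is a continuous path with $h_t(1) = \wt{f}(t)\in V$, there is $a\in[0,1)$ with $h_t([a,1])\subseteq V$. For each $s\in T$, let $m_s$ denote the unique median of $\{t_0,s,t\}$ in the $\bbr$-tree $T$; then $\gamma_s$ decomposes as the sub-arc of $\gamma_t$ from $t_0$ to $m_s$, followed by the unique arc from $m_s$ to $s$. Applying unique path lifting to the shared initial sub-arc, $h_s$ agrees with $h_t$ (under the induced reparameterization) up to the parameter at which the median is reached, so $\wt{f}(m_s)$ equals the value of $h_t$ at that parameter. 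Since $d(m_s,t)\leq d(s,t)$, we have $m_s\to t$ as $s\to t$, so for $s$ in a small enough neighborhood of $t$ the parameter of $m_s$ exceeds $a$ and $\wt{f}(m_s)\in V$.

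The delicate final step, which I expect to be the main obstacle, is to show that the lift of the ``tail'' $f$ applied to the short arc from $m_s$ to $s$ — starting at $\wt{f}(m_s)\in V$ — produces endpoints $\wt{f}(s)$ that also lie in $V$ for $s$ sufficiently close to $t$. The $T$-arc from $m_s$ to $s$ has length at most $2d(s,t)$, so by continuity of $f$ its $X$-image lies in an arbitrarily small neighborhood of $f(t)$. Combining this localization of the $X$-image with the unique path lifting hypothesis and the standard covering-theoretic argument controlling lifts of paths with small image (starting from a prescribed neighborhood of $\wt{f}(t)$) should yield $\wt{f}(s)\in V$ and complete the verification of continuity.
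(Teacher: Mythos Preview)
Your construction of $\wt{f}$ and the uniqueness argument are correct and are exactly the ``standard techniques'' the paper has in mind (the paper states this lemma without proof). The gap is in your continuity argument, specifically the final step you yourself flag as the main obstacle.

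The ``standard covering-theoretic argument controlling lifts of paths with small image'' that you invoke does not exist under the given hypotheses. That argument relies on evenly covered neighborhoods (or some local triviality of $p$), which would force the lift of a short path to be short. Here $p$ is merely assumed to have unique path lifting; there is no hypothesis that $p$ is a local homeomorphism, a fibration, or that the path-lifting function is continuous. Consequently there is no a~priori bound on where the lift of the short arc $[m_s,s]$, starting at $\wt{f}(m_s)\in V$, must end. Your step~4 as written does not go through.

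The fix is to avoid lifting the short tails one at a time and instead bundle them into a single path whose lift is continuous by hypothesis. Given a sequence $s_n\to t$, build a path $\sigma:\ui\to T$ that first traverses $\gamma_t$, then performs the out-and-back loops $t\to s_1\to t,\ t\to s_2\to t,\ \dots$ on shrinking subintervals, with $\sigma(1)=t$; continuity of $\sigma$ at $1$ follows because in an $\bbr$-tree the arc $[t,s_n]$ lies in the closed ball of radius $d(t,s_n)$ about $t$. Let $\wt{\sigma}$ be the unique lift of $f\circ\sigma$ starting at $e_0$. Using uniqueness, the lift of each inserted inverse pair $\beta\ov{\beta}$ is itself an inverse pair, so $\wt{\sigma}$ returns to $\wt{f}(t)$ at the end of every loop; and the same cancellation shows that at the midpoint of the $n$-th loop (where $\sigma$ hits $s_n$) one has $\wt{\sigma}=\wt{f}(s_n)$. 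Continuity of $\wt{\sigma}$ at $1$ then gives $\wt{f}(s_n)\to\wt{f}(t)$. Since $T$ is metric, sequential continuity suffices.
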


\subsection{Proof of Lemma \ref{lemdydaksproblem}}

\begin{proof}[Proof of Lemma \ref{lemdydaksproblem}]
Let $p:E\to X$ be a map where $X$ is first countable, locally path connected and simply connected and such that every path in $X$ has a unique lift in $E$ rel.\ starting point. Since $X$ is path connected, $p$ is surjective. Suppose that $p(e_1)=p(e_2)=x$ for $e_1,e_2\in E$. Let $\wt{\beta}:\ui\to E$ be a path from $e_1$ to $e_2$. Then $\beta=p\circ\wt{\beta}$ is a loop based at $x$. Let $\alpha:\ui\to X$ be the constant path at $x$. Since $X$ is simply connected, $\alpha\simeq \beta$ and thus, by Theorem \ref{theoremmain}, there exists a path $\gamma:\ui\to X$ such that $\gamma\geq_{\bbr}\alpha$ and $\gamma\geq_{\bbr} \beta$. Let $\wt{\alpha},\wt{\gamma}:(\ui,0)\to (E,e_1)$ be the lifts of $\alpha$ and $\gamma$ respectively and note that $\wt{\alpha}$ is constant at $e_1$. Since $\alpha\ov{\gamma}$ factors through an $\bbr$-tree, it follows from Lemma \ref{dendriteliftlemma} that $\wt{\alpha}(1)=\wt{\gamma}(1)$ in $E$. Similarly, we have $\wt{\beta}(1)=\wt{\gamma}(1)$. Thus $e_1=\wt{\gamma}(1)=e_2$, proving that $p$ is injective. Since $X$ is first countable, it suffices to show $p^{-1}:X\to E$ preserves convergent sequences. If $\{x_n\}\to x$ is a convergent sequence in $X$, the hypotheses on $X$ allow us to find a path $\alpha:\ui\to X$ with $\alpha(1/n)=x_n$ for all $n\in\bbn$ and $\alpha(0)=x$. There is one lift $\wt{\alpha}:\ui\to E$ for which $p\circ\wt{\alpha}=\alpha$ and it satisfies $\wt{\alpha}(1/n)=p^{-1}(x_n)$ and $\wt{\alpha}(0)=p^{-1}(x)$. Since $\wt{\alpha}$ is continuous, $\{p^{-1}(x_n)\}\to p^{-1}(x)$ in $E$. Thus $p^{-1}$ is continuous.
\end{proof}

\begin{proof}[Proof of Theorem \ref{orbit}]
  Suppose that a group $G$ acts on an $\mathbb R$-tree $T$ such that the quotient map $p:T \to T/G$ is a UPL map.  Suppose that $T/G$ contains a 2-dimensional Euclidean disc $D$.  Let $E$ be a path-component of $p^{-1}(D)$.  Then $p|_{E}:E \to D$ is a UPL map over a first countable, locally path-connected, and simply connected space $D$.  Hence $p|_{E}$ is a homeomorphism which is a contradiction, since $E$ is a subspace of an $\mathbb R$-tree.
\end{proof}

\subsection{Proof of Corollary \ref{corcoveringmap}}

\begin{proof}[Proof of Corollary \ref{corcoveringmap}]
    Notice that Lemma \ref{lemdydaksproblem} implies that Dydak's Unique Lifting Problem has a positive answer.  It is an exercise to see that the evaluation map from $P(X,x)$ to $X$ is an open surjection, since $X$ is locally path-connected, see \cite[Section 2.1]{BM}.  Then Corollary \ref{corcoveringmap} follows immediately from Corollary 4.10 of \cite{BM}.
\end{proof}

\section{Appendix: A Proof of Lemma \ref{gammaextensionlemma}}\label{appendix}

To prove Lemma \ref{gammaextensionlemma}, we must first detail the structure of a single $\scru$-extension. For the moment, suppose that $\beta:\ui\to X$ is a given $\scru$-extension of $\alpha:\ui\to X$ (where both are Cantor paths). Additionally, we fix
\begin{enumerate}
\item an $\lc(\alpha)$-collapsing map $k_{\alpha}$ and light path $\alpha^{\light}$ such that $\alpha^{\light}\circ k_{\alpha}=\alpha$,
\item an $\lc(\beta)$-collapsing map $k_{\beta}$ and light path $\beta^{\light}$ such that $\beta^{\light}\circ k_{\beta}=\beta$.
\end{enumerate}
It follows from Remark \ref{extremark} that there is a unique map $k_{\beta,\alpha}:\ui\to \ui$ such that $k_{\alpha}=k_{\beta,\alpha}\circ k_{\beta}$. Let $\scrv=\{k_{\beta}(J)\mid J\in\scru\}$ be the collection of open intervals which are the images of the elements of $\scru$. If $J\in \scru$ and $(c,d)=k_{\beta}(J)\in \scrv$, then $\beta|_{\ov{J}}$ is a CIP-loop and $(\beta^{\light})|_{[c,d]}$ is a LIP-loop. Thus we have equivalence $(\beta^{\light})|_{[c,d]}\equiv b_{J} \ov{b_{J}}$ for a light path $b_J$. In particular, there exists $m\in (c,d)$ and a ``tent map" $\tau_J:[c,d]\to\ui$ which (1) is an increasing homeomorphism $[c,m]\to [0,1]$ on $[c,m]$, (2) a decreasing homeomorphism $[m,d]\to[0,1]$ on $[m,d]$, and (3) satisfies the equality $(\beta^{\light})|_{[c,d]}=b_J\circ \tau_J$.

Let $\sim$ be the smallest equivalence relation on $\ui$ satisfying the following: identify $s\sim t$ if there exists $J\in\scru$ such that $s,t\in [c,d]=k_{\beta}(\ov{J})$ and $\tau_J(s)=\tau_J(t)$. Let $D(\beta,\alpha)=\ui/\mathord{\sim}$ and $q:\ui\to D(\beta,\alpha)$ denote the quotient ``folding" map.

Note that $D(\beta,\alpha)$ is constructed by folding each interval $[c,d]=k_{\beta}(\ov{J})$, $J\in\scru$ ``in half" according to the tent map $\tau_J$. Hence, $D(\beta,\alpha)$ is a dendrite consisting of the ``base arc" $B=q(\ui\backslash \bigcup\scrv)$ and possibly infinitely many attached arcs. In particular, the arc $A_J=q([c,d])$ meets $B$ at the point $q(\{c,d\})$ and has free-endpoint $q(m)$ (where $m$ is defined as above).

The definition of $\sim$ ensures that $\beta^{\light}$ is constant on the fibers of $q$ and, therefore, there is a unique map $F:D(\beta,\alpha)\to X$ such that $F\circ q=\beta^{\light}$. Recall that each fiber of $k_{\beta}$ is contained in (and possibly equal to) a fiber of $k_{\alpha}$. Moreover if $q(s)=q(t)$ for $s\neq t$, then $k_{\beta}^{-1}(s)$ and $k_{\beta}^{-1}(t)$ lie in the same fiber of $k_{\alpha}$ (the closure of some element of $\scru$). Therefore, $k_{\alpha}$ is constant on the fibers of $q\circ k_{\beta}$ and there is a unique map $r:D(\beta,\alpha)\to \ui$ such that $k_{\alpha}=r\circ q\circ k_{\beta}$. In particular, $r$ maps the base-arc $B$ homeomorphically onto $\ui$ and if $(c,d)=k_{\beta}(J)$ for $J\in\scru$, then $r$ maps the arc $A_J$ to the point $k_{\beta,\alpha}([c,d])=k_{\alpha}(\ov{J})$ in $\ui$. It follows that $r$ is a monotone map. Finally, since $r\circ q\circ k_{\beta}=k_{\alpha}=k_{\beta,\alpha}\circ k_{\beta}$ where $k_{\beta}$ is surjective, we have $r\circ q=k_{\beta, \alpha}$. Overall, the following diagram on the left commutes. We address the diagram on the right in the next proposition.
\[\xymatrix{
\ui \ar@/_4pc/[ddd]_-{\beta} \ar[d]^-{k_{\beta}} \ar[r]^-{id} & \ui \ar[d]_-{k_{\alpha}} \ar@/^3pc/[ddd]^(.7){\alpha} &&&  C \ar@/_4pc/[ddd]_(.3){\beta|_{C}} \ar[d]^-{k_{\beta}|_{C}} \ar[r]^-{id} & C \ar[d]_-{k_{\alpha}|_{C}} \ar@/^3pc/[ddd]^-{\alpha|_{C}}   \\
\ui \ar@/_2pc/[dd]_(.3){\beta^{\light}} \ar[d]^-{q} \ar[r]^-{k_{\beta,\alpha}} & \ui \ar[d]_-{id} &&&  \ui \ar@/_2pc/[dd]_(.3){\beta^{\light}} \ar[d]^-{q}  & \ui \ar[dl]^-{\sigma} \ar[d]^-{id}   \\
D(\beta,\alpha) \ar[d]^-{F} \ar[r]^-{r} & \ui \ar[d]_-{\alpha^{\light}} &&& D(\beta,\alpha) \ar[d]^-{F}  & \ui \ar[d]_-{\alpha^{\light}}\\
X & X &&&  X \ar@{=}[r] & X
}\]

\begin{proposition}\label{retractionprop}
The map $r:D(\beta,\alpha)\to \ui$ is a retraction. Moreover, the unique section $\sigma:\ui\to D(\beta,\alpha)$, which satisfies $r\circ\sigma=id$, parameterizes the base-arc $B$ and satisfies $F\circ \sigma=\alpha^{\light}$.
\end{proposition}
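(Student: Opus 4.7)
The plan is to produce $\sigma$ as the inverse of the restriction $r|_{B} : B \to \ui$, which I claim is a homeomorphism. Once $\sigma$ is constructed this way, the section identity $r \circ \sigma = \mathrm{id}_{\ui}$ is automatic and $\sigma$ visibly parameterizes the base-arc $B$; the equality $F \circ \sigma = \alpha^{\light}$ will then fall out of a short diagram chase using $k_{\alpha} = k_{\beta,\alpha} \circ k_{\beta}$ together with the defining property (1) of a $\scru$-extension.

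First I would verify that $r|_{B}$ is a continuous bijection. Since $\bigcup \scrv$ is open, the subset $\ui \setminus \bigcup \scrv$ is closed and hence compact, so $B = q(\ui \setminus \bigcup \scrv)$ is compact. For surjectivity, given $s \in \ui$ pick $t \in \ui$ with $k_{\beta,\alpha}(t) = s$; if $q(t) \in A_J$ rather than $B$, then $s = k_\alpha(\ov{J})$ and I may use the attachment point $q(c) = q(d) \in B$ instead, where $(c,d) = k_\beta(J)$. For injectivity, the non-trivial fibers of $k_{\beta,\alpha}$ have the form $k_\beta(\ov{K})$ for $K \in \lc(\alpha)$: this is a single point when $K \in \lc(\alpha) \setminus \scru$ (since $\alpha = \beta$ is constant on $K$) and equals the interval $[c,d] = k_\beta(\ov{J})$ when $K = J \in \scru$. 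Two distinct points of $\ui \setminus \bigcup \scrv$ sharing a $k_{\beta,\alpha}$-value must therefore lie in some such $[c,d]$, and avoiding the open $(c,d)$ they are forced to be $\{c,d\}$; these are identified by $\sim$ because $\tau_J(c) = \tau_J(d) = 0$, so their $q$-images coincide in $B$. A continuous bijection from compact to Hausdorff is a homeomorphism, so $\sigma := (r|_{B})^{-1}$ is well-defined. Uniqueness of $\sigma$ follows from the observation that any continuous section $\sigma'$ must send each $s$ into $B$: if $\sigma'(s)$ lay in the interior of some $A_J$ then $r(\sigma'(s))$ would be a single point $p_J$, forcing $s = p_J$, and continuity would then require $\sigma'$ to map a whole neighborhood of $p_J$ into $A_J$, violating the section equation at nearby points.

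Next, to establish $F \circ \sigma = \alpha^{\light}$, I fix $s \in \ui$ and pick $t \in \ui \setminus \bigcup \scrv$ with $\sigma(s) = q(t)$, so that $k_{\beta,\alpha}(t) = s$ and $F(\sigma(s)) = \beta^{\light}(t)$. Choose any $u \in k_\beta^{-1}(t)$; if $u$ lay in some $J \in \scru$ then $t = k_\beta(u) \in k_\beta(J) = (c,d) \subset \bigcup \scrv$, contradicting the choice of $t$. Hence $u \in \ui \setminus \bigcup \scru$, and the $\scru$-extension property gives $\alpha(u) = \beta(u)$. The chain
\[ F(\sigma(s)) = \beta^{\light}(t) = \beta(u) = \alpha(u) = \alpha^{\light}(k_\alpha(u)) = \alpha^{\light}(k_{\beta,\alpha}(k_\beta(u))) = \alpha^{\light}(s) \]
then delivers the desired identity.

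The main obstacle is the injectivity of $r|_{B}$: it requires inventorying the possible fibers of $k_{\beta,\alpha}$ and checking that the only identifications those fibers impose on $\ui \setminus \bigcup \scrv$ are exactly the endpoint pairs $\{c,d\}$ already absorbed into the equivalence relation $\sim$ via the tent maps $\tau_J$. Once this is pinned down, surjectivity, uniqueness, and the equation $F \circ \sigma = \alpha^{\light}$ are all short diagram chases on the commutative square already set up before the proposition.
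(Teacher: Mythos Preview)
Your proof is correct and takes essentially the same approach as the paper's: both arguments show that $r|_{B}$ is a homeomorphism and then deduce $F\circ\sigma=\alpha^{\light}$ by comparing $\alpha$ and $\beta$ on $\ui\setminus\bigcup\scru$. The paper packages the bijectivity check slightly differently---it observes that $q\circ k_{\beta}|_{C}$ and $k_{\alpha}|_{C}$ (for $C=\ui\setminus\bigcup\scru$) are quotient maps making identical identifications and invokes the resulting unique homeomorphism---whereas you analyze the fibers of $k_{\beta,\alpha}$ directly, but the content is the same.
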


\begin{proof}
Let $C=\ui\backslash \bigcup\scru$ and note that the restrictions $q\circ k_{\beta}|_{C}:C\to B$ and $(k_{\alpha})|_{C}:C\to \ui$ are quotient maps that make the same identifications, namely they collapse intervals $\ov{J}$ for $J\in\lc(\alpha)\backslash \scru$ and identify endpoints of intervals $J\in\scru$. Thus, there exists a  unique homeomorphism $\sigma:\ui\to B$ such that $\sigma\circ (k_{\alpha})|_{C}=q\circ k_{\beta}|_{C}$ (see the right diagram above). Since $r\circ \sigma\circ (k_{\alpha})|_{C}=r\circ q\circ k_{\beta}|_{C}=(k_{\alpha})|_{C}$ where $(k_{\alpha})|_{C}$ is surjective, we have $r\circ \sigma=id_{\ui}$. Overall, we have that $r$ is a retraction whose section $\sigma$ parameterizes $B$.

One can use the above diagram to confirm that $F\circ \sigma\circ (k_{\alpha})|_{C}=\beta|_{C}$. By definition of $\beta$ being a $\scru$-extension of $\alpha$, we have $\alpha|_{C}=\beta|_{C}$. Thus $F\circ \sigma\circ (k_{\alpha})|_{C}=\alpha|_{C}=\alpha^{\light}\circ (k_{\alpha})|_{C}$. Since $(k_{\alpha})|_{C}$ is surjective, we have the desired equality $F\circ \sigma=\alpha^{\light}$.
\end{proof}

\begin{corollary}
If $\alpha\ord\beta$, then $\alpha\simeq_{\bbr}\beta$ and $\alpha^{\lambda}\leq_{\bbr}\beta^{\lambda}$.
\end{corollary}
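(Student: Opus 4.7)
The plan is to recover both conclusions from the dendrite $D(\beta,\alpha)$ and Proposition \ref{retractionprop}, using the same witness for each. I would first fix all the data from the Appendix: the collapsing maps $k_\alpha,k_\beta$, the light paths $\alpha^\light,\beta^\light$, the folding quotient $q:\ui\to D(\beta,\alpha)$, the section $\sigma:\ui\to D(\beta,\alpha)$, and the induced map $F:D(\beta,\alpha)\to X$. The commuting diagram preceding Proposition \ref{retractionprop} gives $F\circ q\circ k_\beta=\beta$, while the proposition itself gives $F\circ\sigma=\alpha^\light$, and hence $F\circ\sigma\circ k_\alpha=\alpha$. These two factorizations through a single dendrite will drive everything.

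For $\alpha\den\beta$, the plan is to concatenate $\sigma\circ k_\alpha$ with $\ov{q\circ k_\beta}$ to produce a loop $\ell$ in $D(\beta,\alpha)$; then $F\circ\ell=\alpha\ov{\beta}$ exhibits $\alpha\ov{\beta}$ as factoring through $D(\beta,\alpha)$, which is an $\bbr$-tree, delivering $\alpha\den\beta$. The only point requiring verification is that $\sigma(i)=q(i)$ for $i\in\{0,1\}$, so that the concatenation is a genuine loop. I would read this off from the identity $\sigma\circ (k_\alpha)|_C=q\circ (k_\beta)|_C$ (with $C=\ui\backslash\bigcup\scru$) established inside the proof of Proposition \ref{retractionprop}, together with the observation that $0,1\in C$ because each element of $\scru\subseteq\lc(\alpha)$ is an open subinterval of $(0,1)$.

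For $\alpha^\light\leq_{\bbr}\beta^\light$, I would apply Definition \ref{defden}(1) directly, taking $T=D(\beta,\alpha)$, $p=q$, $r=\sigma$, and $f=F$. The equalities $F\circ q=\beta^\light$ and $F\circ\sigma=\alpha^\light$ are already in place; the injectivity of $\sigma$ is part of Proposition \ref{retractionprop} (it is a homeomorphism onto the base arc $B$); and the endpoint condition $p(i)=r(i)$ is exactly the observation from the previous paragraph. Definition \ref{defden}(1) is then satisfied verbatim.

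The bulk of the work has already been absorbed into the Appendix, so there is essentially no further obstacle. Once $D(\beta,\alpha)$ is recognized as a genuine dendrite, $\sigma$ is identified as a homeomorphism onto its base arc, and $F$ is known to realize $\alpha^\light$ and $\beta^\light$ along $\sigma$ and $q$ respectively, the corollary is just a matter of reading the diagrams. The subtlest point, should one arise, is the sanity check on endpoint behavior of $q$ and $\sigma$; beyond that, no additional machinery is required.
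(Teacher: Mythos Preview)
Your proposal is correct and follows essentially the same approach as the paper: both use the factorizations $F\circ(\sigma\circ k_\alpha)=\alpha$ and $F\circ(q\circ k_\beta)=\beta$ through the dendrite $D(\beta,\alpha)$ to obtain $\alpha\den\beta$, and then invoke the injectivity of $\sigma$ together with $F\circ q=\beta^{\light}$ and $F\circ\sigma=\alpha^{\light}$ to conclude $\alpha^{\light}\leq_{\bbr}\beta^{\light}$. Your explicit verification of the endpoint condition $\sigma(i)=q(i)$ via $\sigma\circ(k_\alpha)|_C=q\circ(k_\beta)|_C$ and $0,1\in C$ is a detail the paper leaves implicit, but the argument is otherwise identical.
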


\begin{proof}
Note that $\sigma\circ k_{\alpha}$ and $q\circ k_{\beta}$ are paths in the compact $\bbr$-tree $D(\beta,\alpha)$ with the same endpoints and which satisfy $F\circ (\sigma\circ k_{\alpha})=\alpha$ and $F\circ (q\circ k_{\beta})=\beta$. Thus $\alpha\den\beta$. Moreover, since $F\circ q=\beta^{\lambda}$ and $F\circ \sigma=\alpha^{\lambda}$ where $\sigma$ is injective, we have $\alpha^{\lambda}\leq_{\bbr}\beta^{\lambda}$.
\end{proof}

Again, we suppose that $\beta$ is a $\scru$-extension of $\alpha$ (where both are Cantor paths) and we reuse the above notation. However, now we suppose also that there exists a dendrite $D$, a map $H:D\to X$, and a surjective path $Q:\ui\to D$ such that $H\circ Q= \alpha^{\light}$ (surjectivity of $Q$ is not required for the following construction but appears naturally in our recursive application so we assume it). In particular, we use this data and the above construction of $D(\beta,\alpha)$ to uniquely determine a factorization of $\beta^{\lambda}$.

Let $D'$ be the pushout of $Q:\ui\to D$ and the section $\sigma:\ui\to D(\beta,\alpha)$. We have the following pushout square. Note that $i$ is injective since $\sigma$ is injective. Also, $j$ is surjective since $Q$ is surjective. All of the domains being compact ensures that $i$ is an embedding and $j$ is a quotient map. Observe that since $i(D)$ is a dendrite and $D'\backslash i(D)\cong D(\beta,\alpha)\backslash \sigma(\ui)$ is a disjoint union of half-open arcs, $D'$ is a dendrite.
\[\xymatrix{
\ui \ar[r]^{Q} \ar@{^{(}->}[d]_{\sigma} & D \ar@{^{(}->}[d]^-{i} \\
D(\beta,\alpha) \ar[r]_-{j}  & D'
}\]
Since $H\circ Q=\alpha^{\light}=F\circ \sigma$, there exists a unique map $H':D'\to X$ making the left diagram below commute. Set $Q'=j\circ q:\ui\to D'$. Then $H'\circ Q'=H' \circ j\circ q=F\circ q=\beta^{\light}$. Additionally, since $r\circ \sigma=id_{\ui}$ the right diagram below shows that there exists a map $R:D'\to D$ such that $R\circ i=id_{D}$ and $R\circ j=Q\circ r$. Thus $R$ is a retraction with section $i$. Additionally, note that $R$ is monotone, since $r$ is monotone.
\[\xymatrix{
& \ui \ar@/^5pc/[ddrr]^-{\alpha^{\lambda}} \ar[r]_{Q}   \ar@{^{(}->}[d]_{\sigma} & D \ar@/^1pc/[ddr]^-{H} \ar@{^{(}->}[d]^-{i} && \ui \ar[r]^{Q}   \ar@{^{(}->}[d]_{\sigma} & D \ar@/^1pc/[ddr]^-{id_{D}} \ar@{^{(}->}[d]^-{i}
\\
\ui \ar[r]^-{q} \ar@/_2pc/[drrr]_-{\beta^{\lambda}} & D(\beta,\alpha) \ar@/_1pc/[drr]_-{F} \ar[r]_-{j}  & D' \ar@{-->}[dr]^-{H'} && D(\beta,\alpha) \ar@/_1pc/[drr]_-{Q\hspace{.125em}\circ \hspace{.125em} r } \ar[r]_-{j}  & D' \ar@{-->}[dr]^-{R}
 \\
&&& X & && D
}\]


We also have $R\circ Q'=R\circ j\circ q=Q\circ r\circ q=Q\circ k_{\beta,\alpha}$ and $H'\circ i\circ Q=\alpha^{\light}$. Finally, note that the paths $i\circ Q\circ k_{\alpha}:\ui\to D'$ and $Q'\circ k_{\beta}:\ui\to D'$ start and end at the same points and that $H'\circ i\circ Q\circ k_{\alpha}=\alpha$ and $H'\circ Q'\circ k_{\beta}=\beta$. This gives a factorization of the loop $\alpha\ov{\beta}:\ui\to X$ through the dendrite $D'$. Overall, we conclude the following, which employs the notation in the construction of both $D(\beta,\alpha)$ and $D'$.

\begin{lemma}\label{iterationlemma}
Let Cantor-path $\beta$ be a $\scru$-extension of another Cantor-path $\alpha$ and let $k_{\alpha}$ and $k_{\beta}$ be collapsing maps for these paths respectively. Suppose there exists a dendrite $D$, a map $H:D\to X$, and a surjective path $Q:\ui\to D$ such that $H\circ Q= \alpha^{\light}$. Then there exists a dendrite $D'$ (constructed from the pushout square $i\circ Q=j\circ \sigma$), a map $Q':\ui\to D'$ defined as $Q=j\circ q$, a monotone retraction $R:D'\to D$ with section $i$, and a map $H':D'\to X$ such that the following diagram commutes
\[\xymatrix{
\ui \ar@/_4pc/[ddd]_-{\beta} \ar[d]_-{k_{\beta}} \ar[r]^-{id} & \ui \ar[d]^-{k_{\alpha}} \ar@/^4pc/[ddd]^-{\alpha} \\
\ui \ar@/_2pc/[dd]_-{\beta^{\light}} \ar[d]^-{Q'} \ar[r]^-{k_{\beta,\alpha}} & \ui \ar[d]_-{Q}  \ar@/^2pc/[dd]^-{\alpha^{\light}}\\
D' \ar[d]^-{H'} \ar[r]^-{R} & D \ar[d]_-{H}\\
X & X
}\]
Moreover, $(i\circ Q\circ k_{\alpha}) (\ov{Q'\circ k_{\beta}})$ is a well-defined loop in $D'$ satisfying \[\alpha\ov{\beta}=H'\circ ((i\circ Q\circ k_{\alpha}) (\ov{Q'\circ k_{\beta}})).\]
\end{lemma}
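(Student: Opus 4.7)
The plan is to formalize the pushout construction sketched in the paragraphs immediately preceding the lemma and then verify each claimed property by a short diagram chase. First I would define $D'$ as the topological pushout of the span $D\xleftarrow{Q}\ui\xrightarrow{\sigma}D(\beta,\alpha)$, producing continuous maps $i:D\to D'$ and $j:D(\beta,\alpha)\to D'$ with $i\circ Q=j\circ\sigma$. Since $\sigma$ is a continuous injection from a compact space into a Hausdorff space, it is a closed embedding; a standard pushout argument (together with surjectivity of $Q$) shows that $i$ is likewise a closed embedding and that $j$ is a surjective quotient map.

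The main obstacle I anticipate is verifying that $D'$ is a dendrite. The pushout identifies the base arc $B=\sigma(\ui)\subseteq D(\beta,\alpha)$ with $D$ via $Q\circ\sigma^{-1}$, and attaches each arc $A_J\setminus B$ (for $J\in\scru$) onto $i(D)$ at the single point $i(Q(k_{\beta,\alpha}(c)))$, where $(c,d)=k_\beta(J)$. I would verify that this attaching preserves the dendrite property by checking local connectedness at each attaching point, using that whenever $J_n\in\scru$ is a sequence with closures shrinking to a point $t\in\ui\setminus\bigcup\scru$, the attached arcs $A_{J_n}$ shrink in $D(\beta,\alpha)$ to $\sigma(k_\alpha(t))$; unique arcwise connectedness of $D'$ then follows directly from the pushout description and the fact that both $D$ and $D(\beta,\alpha)$ are dendrites.

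With $D'$ in hand, the universal property of the pushout produces $H'$ and $R$ immediately. For $H'$, Proposition \ref{retractionprop} gives $H\circ Q=\alpha^{\light}=F\circ\sigma$, so the pair $(F,H)$ descends uniquely to $H':D'\to X$ with $H'\circ j=F$ and $H'\circ i=H$. For $R$, the equality $(Q\circ r)\circ\sigma=Q\circ id_{\ui}=id_D\circ Q$ lets the pair $(Q\circ r,id_D)$ descend uniquely to $R:D'\to D$ with $R\circ j=Q\circ r$ and $R\circ i=id_D$, exhibiting $R$ as a retraction with section $i$. Monotonicity of $R$ transfers from monotonicity of $r$: for each $x\in D$, $R^{-1}(x)=i(\{x\})\cup j(r^{-1}(Q^{-1}(x))\setminus\sigma(\ui))$ is connected because $r^{-1}(t)$ is connected for every $t\in\ui$.

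Finally, setting $Q'=j\circ q$, the central diagram reduces to the chases $H'\circ Q'=F\circ q=\beta^{\light}$ and $R\circ Q'=Q\circ r\circ q=Q\circ k_{\beta,\alpha}$, combined with the defining factorizations $\alpha=\alpha^{\light}\circ k_\alpha$, $\beta=\beta^{\light}\circ k_\beta$, and $k_\alpha=k_{\beta,\alpha}\circ k_\beta$. The loop factorization is then immediate: the paths $i\circ Q\circ k_\alpha$ and $Q'\circ k_\beta$ in $D'$ share endpoints, because $\{0,1\}\subseteq C=\ui\setminus\bigcup\scru$ and Proposition \ref{retractionprop} gives $\sigma(k_\alpha(t))=q(k_\beta(t))$ for $t\in\{0,1\}$, so $i(Q(k_\alpha(t)))=j(\sigma(k_\alpha(t)))=j(q(k_\beta(t)))=Q'(k_\beta(t))$; applying $H'$ to the concatenation $(i\circ Q\circ k_\alpha)(\ov{Q'\circ k_\beta})$ then yields $\alpha\,\ov{\beta}$ by the already-verified commutativities.
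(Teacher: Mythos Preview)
Your proposal is correct and follows essentially the same route as the paper: the paper's proof is precisely the construction you describe (pushout of $Q$ and $\sigma$, universal-property definitions of $H'$ and $R$, setting $Q'=j\circ q$, and the same diagram chases), with the dendrite property of $D'$ justified via the observation that $D'\setminus i(D)\cong D(\beta,\alpha)\setminus\sigma(\ui)$ is a disjoint union of half-open arcs attached to the dendrite $i(D)$. Your treatment of monotonicity of $R$ and of the endpoint matching is slightly more explicit than the paper's, but the argument is the same; note that the cleanest way to see $R$ is monotone is exactly this attached-arc picture (each fiber $R^{-1}(x)$ is $\{i(x)\}$ together with whatever arcs are attached there), rather than the formula you wrote, which implicitly needs $Q^{-1}(x)$ to be connected.
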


In the proof of Lemma \ref{gammaextensionlemma}, we iterate the above construction. 


\begin{proof}[Proof of Lemma \ref{gammaextensionlemma}]
Since $\gamma_n\ord\gamma_{n+1}$, $\gamma_{n+1}$ is a $\scru_n$-extension of $\gamma_{n}$ for some $\scru_n\subseteq \lc(\gamma_n)$. Before we inductively apply the construction from Lemma \ref{iterationlemma}, we fix collapsing functions $k_{\gamma_n}$ for the paths $\gamma_n$. We then have uniquely determined maps $k_{\gamma_{n+1},\gamma_n}:\ui\to\ui$ such that $k_{\gamma_{n+1},\gamma_n}\circ k_{\gamma_{n+1}}=k_{\gamma_n}$ and light  paths $\gamma_{n}^{\lambda}$. To simplify notation in the inverse systems to come, we write $k_n$ for $k_{\gamma_n}$ and $K_{n+1,n}$ for $k_{\gamma_{n+1},\gamma_n}$. Recall that since $\gamma_{n}\ord\gamma_{n+1}$ for all $n\in\bbn$, we have a dendrite $D(\gamma_{n+1},\gamma_{n})$, which comes equipped with a corresponding folding map $q_{n+1}:\ui\to D(\gamma_{n+1},\gamma_{n})$, retraction $r_{n+1,n}:D(\gamma_{n+1},\gamma_{n})\to\ui$, and embedding $\sigma_{n,n+1}:\ui\to D(\gamma_{n+1},\gamma_{n})$ such that $r_{n+1,n}\circ q_{n+1}=k_{n+1,n}$ and $r_{n+1,n}\circ \sigma_{n,n+1}=id_{\ui}$.

To begin the recursion, set $D_1=\ui$, $H_1=\gamma_{1}^{\light}$, and $Q_1=id_{\ui}$ so that $H_1\circ k_{1}=\gamma_1$. Suppose that we have given dendrite $D_n$, map $H_n:D_n\to X$, and quotient map $Q_n:\ui\to D_n$ such that $H_n\circ Q_n=\gamma_{n}^{\light}$. We apply the construction used in the proof of Lemma \ref{iterationlemma} to the case where $\gamma_{n+1}$ is a $\scru_n$-extension of $\gamma_n$. We obtain a dendrite $D_{n+1}$ constructed as the pushout of embedding $\sigma_{n,n+1}$ and $Q_{n}$. This pushout construction yields a quotient map $j_{n+1}:D(\gamma_{n+1},\gamma_n)\to D_{n+1}$, and an embedding $i_{n,n+1}:D_n\to D_{n+1}$  such that  $i_{n,n+1}\circ Q_n=j_{n+1}\circ \sigma_{n,n+1}$. We also obtain a map $H_{n+1}:D_{n+1}\to X$, a quotient map $Q_{n+1}:\ui\to D_{n+1}$ defined as $Q_{n+1}=j_{n+1}\circ q_{n+1}$, and a retraction $R_{n+1,n}:D_{n+1}\to D_n$. These maps satisfy $H_{n+1}\circ Q_{n+1}=(\gamma_{n+1})^{\light}$, $R_{n+1,n}\circ Q_{n+1}=Q_{n}\circ k_{n+1,n}$, and $R_{n+1,n}\circ i_{n,n+1}=id_{D_n}$.

This recursion results in the following infinite diagram where the top three rows form inverse systems. In the $n$-th column, the vertical composition is $\gamma_n$.
\[\xymatrix{
\ui \ar[r] \ar[d]^-{k_{\infty}} & \cdots \ar[r]^-{id}  & \ui \ar[d]^-{k_4} \ar[r]^-{id} & \ui \ar[d]^-{k_3}  \ar[r]^-{id} & \ui \ar[d]^-{k_2} \ar[r]^-{id} & \ui \ar[d]^-{k_1} \\
\ui \ar[r] \ar[d]^-{Q_{\infty}} & \cdots  \ar[r]^-{K_{5,4}} & \ui \ar[d]^-{Q_4} \ar[r]^-{K_{4,3}} & \ui \ar[d]^-{Q_3} \ar[r]^-{K_{3,2}} & \ui \ar[d]^-{Q_2} \ar[r]^-{K_{2,1}} & \ui \ar[d]^-{Q_1=id_{\ui}} \\
D_{\infty} \ar[r] & \cdots  \ar[r]^-{R_{5,4}} & D_4  \ar[d]^-{H_4} \ar[r]^-{R_{4,3}} & D_3  \ar[d]^-{H_3}  \ar[r]^-{R_{3,2}} & D_2  \ar[d]^-{H_2} \ar[r]^-{R_{2,1}} & D_1=\ui \ar[d]^-{H_1=\gamma_{1}^{\lambda}} \\
& \cdots  & X & X& X& X
}\]
The inverse limit of the top row may be identified with $\ui$ so that the projection maps are also the identity. Since the bonding maps in the second row are non-decreasing continuous surjections, the inverse limit $\varprojlim_{n}(\ui,K_{n+1,n})$ may also be identified with $\ui$ and the bonding maps $K_n:\ui\to\ui$ are also non-decreasing continuous surjections, see \cite[Theorem 4.8 and Lemma 4.2]{Capel}. We let $k_{\infty}=(\varprojlim_{n}k_{n}):\ui\to\ui$ be the inverse limit of the morphisms connecting the first two rows.

In the third row, we have an inverse sequence where the bonding maps $R_{n+1,n}$ are monotone retractions of dendrites. Since any inverse limit of dendrites with monotone bonding maps is a dendrite \cite[Theorem 10.36]{Nadler}, the inverse limit $D_{\infty}=\varprojlim_{n}(D_n,R_{n+1,n})$ is a dendrite and the $n$-th projection $R_n:D_{\infty}\to D_n$ is also a retraction. For $m'\geq m$, let $i_{m,m'}:D_m\to D_{m'}$ and $R_{m',m}:D_{m'}\to D_m$ be the respective composition of the sections $i_{n,n+1}$ and retractions $R_{n+1,n}$ (and the identity if $m=m'$). For fixed $n$, the maps $i_{n,m}$, $m\geq n$ induce a unique map $i_n:D_n\to D_{\infty}$ such that $R_m\circ i_n=i_{n,m}$. The case $m=n$ shows $R_n\circ i_n=id_{D_n}$. Finally, let $Q_{\infty}=\varprojlim_{n}Q_n$ be the inverse limit of the maps connecting the second and third rows. Then the following diagram commutes for all $n\geq 1$.
\[\xymatrix{
\ar@/_3pc/[ddd]_-{\gamma} \ui \ar[d]_-{k_{\infty}} \ar[r]^-{id} & \ui \ar[d]^-{k_{n}} \ar@/^5pc/[ddd]^-{\gamma_n}  \\
\ui \ar[r]^-{K_n} \ar[d]_-{Q_{\infty}} & \ui \ar@/^2pc/[dd]^-{\gamma_{n}^{\light}}\ar[d]^-{Q_n}\\
D_{\infty} \ar@{-->}[d]^-{\exists H_{\infty}}  \ar[r]^-{R_n} & D_n \ar[d]_-{H_n}  \\
X & X
}\]
We include $\gamma$ in the above diagram to indicate that we intend to show that $\gamma$ is constant on the fibers of $Q_{\infty}\circ k_{\infty}$ and therefore induces a unique map $H_{\infty}:D_{\infty}\to X$ such that $H_{\infty}\circ Q_{\infty}\circ k_{\infty}=\gamma$. First, we pause to verify that $k_{\infty}$ and $Q_{\infty}$ are surjective. In the top two rows, $0$ and $1$ are identified in the inverse limit with $(0,0,0,\dots)$ and $(1,1,1,\dots)$ respectively. Since $k_{\infty}$ is continuous and maps $k_{\infty}(0)=(k_n(0))=(0)=0$ and $k_{\infty}(1)=(k_n(1))=(1)=1$, the connectedness of $\ui$ ensures that $k_{\infty}$ is surjective.

To check that $Q_{\infty}$ is surjective, we first show that $i_n(D_n)\subseteq \im(Q_{\infty})$ for all $n\in\bbn$. If $d_n\in D_n$, we have that $i_n(d_n)=(d_1,d_2,d_3,\dots)$ so that $d_k=i_{n,k}(d_n)$ for $k>n$, and $d_k=R_{n,k}(d_n)$ for $k<n$. Fix $t_n\in \ui$ with $Q_n(t_n)=d_n$. For $k<n$, recursively define $t_{k-1}=K_{k,k-1}(t_k)$. Since $Q_{k-1}\circ K_{k,k-1}=R_{k,k-1}\circ Q_k$, it follows that $Q_k(t_k)=d_k$ for all $1\leq k<n$. For $k\geq n$, recursively choose points $t_{k+1}\in q_{k+1}^{-1}(\sigma_{k,k+1}(t_k))$ (see the diagram below to trace these choices).
\[\xymatrix{
&&& \ui \ar[d]_{\sigma_{n,n+1}} \ar[r]^-{Q_n} & D_n \ar[d]^-{i_{n,n+1}} \\
&& \ui \ar@/_1.6pc/[rr]_-{Q_{n+1}} \ar[d]_{\sigma_{n+1,n+2}} \ar[r]^-{q_{n+1}} & D(\gamma_{n+1},\gamma_n) \ar[r]^-{j_{n+1}} & D_{n+1} \ar[d]^-{i_{n+1,n+2}}\\
& \ui \ar@/_2pc/[rrr]_-{Q_{n+2}} \ar[d]_-{\sigma_{n+2,n+3}} \ar[r]^-{q_{n+2}} & D(\gamma_{n+2},\gamma_{n+1}) \ar[rr]_-{j_{n+2}} && D_{n+2} \ar[d]^-{i_{n+2,n+3}}\\
\iddots & \iddots &&& \vdots
}\]
From this choice, we have for every $k\geq n$ that \[Q_{k+1}(t_{k+1})=j_{k+1}\circ q_{k+1}(t_{k+1})=j_{k+1}(\sigma_{k,k+1}(t_k))=i_{k,k+1}\circ Q_{k}(t_k)\] and so, by induction, we have $Q_k(t_k)=d_k$ for all $k\geq n$. It follows that $Q_{\infty}(t_1,t_2,t_3,\dots)=i_n(d_n)$, proving that $i_n(D_n)\subseteq \im(Q_{\infty})$. Now, consider an arbitrary element $x=(d_1,d_2,d_3,\dots)\in D_{\infty}$. We have $x_n=(d_1,\dots,d_{n-1},d_n,i_{n,n+1}(d_n),i_{n,n+2}(d_n),\dots)\in i_n(D_n)$ for all $n\in\bbn$. Since $D_{\infty}$ is topologized as a subspace of $\prod_{n\in\bbn}D_n$, we have $\{x_n\}_{n\in\bbn}\to x$ in $D_{\infty}$. For each $n\in\bbn$, find $t_n\in\ui$ such that $Q_{\infty}(t_n)=x_n$. Since $\ui$ is compact, we may find a subsequence $\{t_{n_m}\}_{m\in\bbn}$ such that $\{t_{n_m}\}_{m\in\bbn}\to t$ for some $t\in\ui$. Since $\{Q_{\infty}(t_{n_m})\}_{m\in\bbn}=\{x_{n_m}\}_{m\in\bbn}\to x$ and $\{Q_{\infty}(t_{n_m})\}_{m\in\bbn}\to Q_{\infty}(t)$, it follows that $Q_{\infty}(t)=x$. Thus $Q_{\infty}$ is surjective.

Knowing that $Q_{\infty}\circ k_{\infty}$ is surjective, we now check that $\gamma$ is constant on each fiber of $Q_{\infty}\circ k_{\infty}$. Suppose $a,b\in\ui$ such that $Q_{\infty}\circ k_{\infty}(a)=Q_{\infty}\circ k_{\infty}(b)$. Then $Q_{n}\circ k_{n}(a)=Q_{n}\circ k_{n}(b)$ for all $n\in\bbn$. Applying $H_n$ gives
\[\gamma_n(a)=H_n\circ Q_{n}\circ k_{n}(a)=H_n\circ Q_{n}\circ k_{n}(b)=\gamma_n(b)\]
for all $n\in\bbn$. Since $\{\gamma_n\}\to\gamma$ uniformly, we have $\{\gamma_n(a)\}_{n\in\bbn}\to\gamma(a)$ and $\{\gamma_n(b)\}_{n\in\bbn}\to\gamma(b)$ but since these sequences in $X$ are equal, it follows that $\gamma(a)=\gamma(b)$. This completes the check and so we conclude that the desired map $H_{\infty}$ exists.

With the existence of $H_{\infty}$ confirmed, we fix $m\in\bbn$ and check that the equality $H_m=H_{\infty}\circ i_m$ holds. Since $\{H_n\circ R_n\circ Q_{\infty}\circ k_{\infty}\}=\{\gamma_n\}\to \gamma=H_{\infty}\circ Q_{\infty}\circ k_{\infty}$ uniformly and $Q_{\infty}\circ k_{\infty}$ is surjective, we have that $\{H_n\circ R_n\}\to H_{\infty}$ uniformly. Recalling that $m$ is fixed, we have $\{H_n\circ R_n\circ i_{m}\}_{n>m}\to H_{\infty}\circ i_m$ uniformly. However, $\{H_n\circ R_n\circ i_{m}\}_{n>m}=\{H_n\circ i_{m,n}\}_{n>m}=\{H_m\}_{n>m}$ is the constant sequence at $H_m$. Thus $H_m=H_{\infty}\circ i_m$ for all $m\in\bbn$.

We will now consider the endpoints of the paths to complete our proof.  For $t\in\{0,1\}$, set $ x_{n,t}=Q_n\circ k_{n}(t)$. Since $R_{n+1,n}(x_{n+1,t})=x_{n,t}$, we have $Q_{\infty}\circ k_{\infty}(t)=(x_{n,t})_{n\in\bbn}$. However, recall that our inductive construction ensures that $i_{n,m}(x_{n,t})=x_{m,t}$ whenever $m>n$. Thus in the limit, we also have $i_{m}(x_{m,t})=(x_{n,t})_{n\in\bbn}$. We conclude that $Q_{\infty}\circ k_{\infty}(t)=i_n\circ Q_n\circ k_{n}(t)$ for all $n\in\bbn$ and $t\in \{0,1\}$. Therefore, if we set $g_{n,1}=Q_{\infty}\circ \ k_{\infty}$ and $g_{n,2}=i_n\circ Q_n\circ k_{n}$, the concatenation $g_{n,1} \ov{g_{n,2}}$ is a well-defined loop in $D_{\infty}$. Moreover, $H_{\infty}\circ g_{n,1}=H_{\infty}\circ Q_{\infty}\circ  k_{\infty}=\gamma$ and $H_{\infty}\circ g_{n,2}=H_{\infty}\circ i_n\circ Q_n\circ k_{n}=H_n\circ Q_n\circ k_{n}=\gamma_n$. Thus $H_{\infty}\circ(g_{n,1}\ov{g_{n,2}})=\gamma\ov{\gamma_{n}}$, proving $\gamma\den\gamma_n$.
\end{proof}

\end{document}